\long\def\unmarkedfootnote#1{{\long\def\@makefntext##1{##1}\footnotetext{#1}}}
\theoremstyle{plain}
\newtheorem{thm}{Theorem}[section]
\newtoks\prt
\newtheorem{proclaim}[thm]{\the\prt}
\theoremstyle{definition}
\newtheorem{definition}[thm]{Definition}
\def\eqn#1$$#2$${\begin{equation}\label#1#2\end{equation}}
\numberwithin{equation}{section}
\def\diam{\operatorname{diam}}
\def\epsilon{\varepsilon}
\def\er{\mathbb R}
\def\mir1{\mathcal L_1}
\def\phi{\varphi}
\newcommand{\yy}{\mathbb{Y}}
\newcommand{\dd}{\mathbb{D}}
\newcommand{\rr}{\mathbb{R}}
\newcommand{\partcurve}{\operatorname{part}}
\newtoks\by
\newtoks\paper
\newtoks\book
\newtoks\jour
\newtoks\yr
\newtoks\pages
\newtoks\vol
\newtoks\publ
\def\ota{{\hbox\vol{???}}}
\def\cLear{\by=\ota\paper=\ota\book=\ota\jour=\ota\yr=\ota
\pages=\ota\vol=\ota\publ=\ota}
\def\endpaper{\the\by, {\the\paper},
\textit{\the\jour} \textbf{\the\vol} (\the\yr), \the\pages.\cLear}
\def\endbook{\the\by, \textit{\the\book}, \the\publ.\cLear}
\def\endprep{\the\by, \textit{\the\paper}, \the\jour.\cLear}
\def\endyearprep{\the\by, \textit{\the\paper}, \the\jour, (\the\yr).\cLear}
\def\name#1#2{#2 #1}
\def\nom{ \rm no. }
\title{Extension of planar H\"older homeomorphisms}
\author[S. Hencl]{Stanislav Hencl}
\address{Charles University, Department of Mathematical Analysis Sokolovsk\'a 83, 186 00 Prague 8, Czech Republic}
\email{hencl@karlin.mff.cuni.cz}
\author[A. Koski]{Aleksis Koski}
\address{Department of Mathematics and Statistics, P.O. Box 68 (Pietari Kalmin katu 5), FI-00014 University of Helsinki, Finland}
\email{aleksis.koski@helsinki.fi}
\thanks{S. Hencl was supported by the grant GA\v CR P201/21-01976S.  A. Koski was supported by the ERC Advanced Grant number 834728 and by the Finnish Centre of Excellence in Randomness and Structures.}
\date{\today}
\begin{document}

\begin{abstract}
Let $\alpha\in (0,1)$. We show that any $\alpha$-H\"older homeomorphism from the unit circle in the plane to the plane can be extended to an $\alpha$-H\"older homeomorphism from the whole unit disc. 
\end{abstract}

\maketitle

\section{Introduction}

The well-known Kirszbraun extension theorem tells us that we can extend a Lipschitz mapping from any subset of $\er^n$ into $\er^n$ into a Lipschitz mapping of the whole space. However, things get more difficult once we ask our mapping to be injective as well. Let us denote by $\dd$ the unit disc in $\er^2$ and by $\partial \dd$ the unit circle. The classical Schoenflies theorem \cite[Theorem 10.4]{Mo} tells us that any continuous injective mapping $\varphi:\partial\dd\to\er^2$ can be extended to a homeomorphism from $\er^2$ onto $\er^2$. 

The assumption about injectivity is crucial in many applications as it corresponds e.g. to the "non-interpenetration of the matter" in models of Nonlinear Elasticity, see e.g. Ball \cite{Ba}. Similarly it appears naturally in Geometric Function Theory once we want our extension to be quasiconformal as done by Ahlfors \cite{Ah} or Tukia and V\"ais\"al\"a \cite{TV}. Requiring the extension to be both injective and Lipschitz complicates things even further. Such results were first obtained by Tukia in \cite{T} and \cite{T2}. 

Recently these problems became important in the solution of the so called Ball-Evans approximation problem, i.e. given a Sobolev $W^{1,p}$ homeomorphism the question is whether there exists a sequence of diffeomorphisms (or piecewise-linear homeomorphisms) that approximate it in the Sobolev norm. This problem was solved in the planar case for $1<p<\infty$ by Kovalev, Iwaniec and Onninen in \cite{IKO} and the remaining case $p=1$ by Hencl and Pratelli in \cite{HP}, see also Mora-Corral \cite{M} and Bellido and Mora-Corall \cite{BMC} for some initial results in this direction and Daneri and Pratelli \cite{DP} for some results about also approximating the inverse mapping. In many of these results the main crucial step is to establish some extension theorem from a given homeomorphism $\varphi:\partial\dd\to\er^2$ to the homeomorphism $h:\dd\to\er^2$ (or from $\varphi:\er^2\to \partial\dd$ to $h:\er^2\to \dd$ in \cite{IKO}) with some control of $\int |Dh|^p$ or of the Lipschitz constant, see e.g. Hencl and Pratelli \cite[Theorem 2.1]{HP} or Daneri and Pratelli \cite{DP2}. 
The extension of Sobolev homeomorphisms from the boundary attracted a lot of independent attention recently and we recommend e.g. \cite{HKO}, \cite{KKO} or \cite{KO} and references given there for further reading.  

The planar extension of Lipschitz homeomorphisms from the circle has been recently improved by Kovalev in \cite{K} and \cite{K2}, where he was able to obtain a sharp dependence of both the Lipschitz- and bilipschitz constants of the extension in terms of the constants for the original boundary map.  

The main aim of the present paper is to study the extension of $\alpha$-H\"older homeomorphisms from the unit circle $\partial \dd$. A mapping $\varphi$ is called $\alpha$-H\"older for $\alpha\in(0,1)$ if there is $C>0$ with
$$
|\varphi(x)-\varphi(y)|\leq C|x-y|^{\alpha}\text{ for every }x,y. 
$$ 
Our main result is the following extension theorem from the unit circle $\partial \dd$ in the plane.  

\prt{Theorem}
\begin{proclaim}\label{main}
Let $\alpha\in(0,1)$ and let $\varphi:\partial\dd\to\er^2$ be a homeomorphism which is $\alpha$-H\"older continuous. Then there is a homeomorphism $H:\overline{\dd}\to\er^2$ which is $\alpha$-H\"older and satisfies $H=\varphi$ on $\partial\dd$. 
\end{proclaim}

Let us note that it is not possible to use previously known extension procedures. One of the main reasons is that the Lipschitz-property is local and any locally Lipschitz $H$ is globally Lipschitz. However, a locally H\"older continuous map (on small scales) might not be globally H\"older continuous on bigger scales and thus we require new ideas. 

In our proof we cut $\dd$ into some dyadic parts and we define our $h$ on those parts separately. We define images of boundaries of those dyadic parts by somehow "copying"  the behavior of $\phi$ on the boundary nearby. 
Then we extend the mapping inside those images by using shortest curves inside with the help of some ideas from \cite{HP}. Since everything is dyadic in the domain we can sum the local 
estimates using a basic geometric series estimate and we obtain the desired global H\"older estimate at the end.


\section{Examples and technical lemma}

In our extension results we assume that we map a circle to a Jordan curve in the plane. The following simple example shows that the circle cannot be replaced by some non-Lipschitz domain even for a Lipschitz mapping $\varphi$. We use the following simple domains with an outer and inner cusp (see Fig \ref{domains})
$$
\begin{aligned}
\Omega_1&:=(-1,1)^2\cup\{[x,y]:\ x\in[1,2),\ |y|<(2-x)^2\}\text{ and }\\
\Omega_2&:=(-1,1)^2\setminus\{[x,y]:\ x\in[0,1], |y|<x^2\}.\\
\end{aligned}
$$

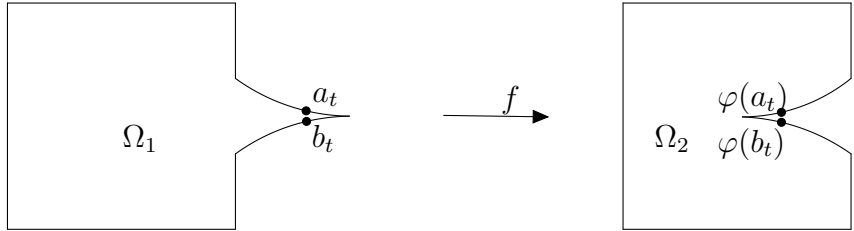
\begin{figure}
\vskip -50pt
{
\begin{tikzpicture}[line cap=round,line join=round,>=triangle 45,x=0.5cm,y=0.5cm]
\clip(-4.3,-11.319999999999997) rectangle (29.78,6.299999999999999);
\draw (0.0,1.0)-- (0.0,-5.0);
\draw (0.0,-5.0)-- (6.0,-5.0);
\draw (6.0,-5.0)-- (6.0,-3.0);
\draw (6.0,-1.0)-- (6.0,1.0);
\draw (6.0,1.0)-- (0.0,1.0);
\draw [shift={(9.063939393939394,3.1918181818181814)}] plot[domain=4.08120266372083:4.700074188255265,variable=\t]({1.0*5.1922118869664*cos(\t r)+-0.0*5.1922118869664*sin(\t r)},{0.0*5.1922118869664*cos(\t r)+1.0*5.1922118869664*sin(\t r)});
\draw [shift={(9.063939393939394,-7.191818181818181)}] plot[domain=1.5831111189243212:2.2019826434587566,variable=\t]({1.0*5.1922118869664*cos(\t r)+-0.0*5.1922118869664*sin(\t r)},{0.0*5.1922118869664*cos(\t r)+1.0*5.1922118869664*sin(\t r)});
\draw (16.2,1.0)-- (16.2,-5.0);
\draw (16.2,-5.0)-- (22.2,-5.0);
\draw (22.2,-5.0)-- (22.2,-3.0);
\draw (22.2,-1.0)-- (22.2,1.0);
\draw (22.2,1.0)-- (16.2,1.0);
\draw [shift={(19.321136363636363,2.5524999999999944)}] plot[domain=4.7165144105320955:5.393425299103003,variable=\t]({1.0*4.572538910362253*cos(\t r)+-0.0*4.572538910362253*sin(\t r)},{0.0*4.572538910362253*cos(\t r)+1.0*4.572538910362253*sin(\t r)});
\draw [shift={(19.134441416893736,-7.283160762942772)}] plot[domain=0.9495954761730744:1.53176005759885,variable=\t]({1.0*5.267173392595784*cos(\t r)+-0.0*5.267173392595784*sin(\t r)},{0.0*5.267173392595784*cos(\t r)+1.0*5.267173392595784*sin(\t r)});
\draw [->] (11.48,-1.98) -- (14.26,-2.02);
\draw (12.700000000000001,-0.9099999999999999) node[anchor=north west] {$f$};
\draw (2.74,-1.9599999999999999) node[anchor=north west] {$\Omega_1$};
\draw (16.74,-1.9599999999999999) node[anchor=north west] {$\Omega_2$};
\draw (7.74,-0.9599999999999999) node[anchor=north west] {$a_t$};
\draw (7.72,-1.9399999999999997) node[anchor=north west] {$b_t$};
\draw (18.4,-0.80399999999999998) node[anchor=north west] {$\varphi(a_t)$};
\draw (18.4,-2.0199999999999996) node[anchor=north west] {$\varphi(b_t)$};
\begin{scriptsize}
\draw [fill=black] (7.860251630130995,-1.8589441414765036) circle (1.5pt);
\draw [fill=black] (7.8713095233549275,-2.13843334876372) circle (1.5pt);
\draw [fill=black] (20.36902817370631,-1.898346530848396) circle (1.5pt);
\draw [fill=black] (20.36899040663466,-2.1627111401460937) circle (1.5pt);
\end{scriptsize}
\end{tikzpicture}
}
\vskip -100pt
\caption{Definition of $\Omega_1$, $\Omega_2$ and $\varphi$ between $\partial\Omega_1$ and $\partial\Omega_2$.}\label{domains}
\end{figure}

\prt{Example}
\begin{proclaim}
There is a Lipschitz homeomorphism $\varphi:\partial \Omega_1\to\partial\Omega_2$ which does not admit a Lipschitz homeomorphic extension $h: \overline{\Omega_1}\to\overline{\Omega_2}$. 
\end{proclaim}
\begin{proof}
We simply define $\varphi$ as the identity map on $\partial\Omega_1\cap \partial\Omega_2$, and a simple reflection across the line $x=1$ from the inner cusp to the outer cusp, i.e. we set 
$$
\varphi(x,y)=
\begin{cases}
[x,y]&\text{ for }[x,y]\in\partial \Omega_1\cap\partial (-1,1)^2,\\ 
[2-x,y]&\text{ for }[x,y]\in\partial \Omega_1\setminus\partial (-1,1)^2.\\
\end{cases}
$$
This mapping is clearly a homeomorphism from $\partial\Omega_1$ to $\partial\Omega_2$ and it is not difficult to find out that it is Lipschitz. 

Given small $t>0$ we consider two points (see Fig. \ref{domains})
$$
a_t=[2-t,t^2]\text{ and }b_t=[2-t,-t^2]
$$  
that are mapped to 
$$
\varphi(a_t)=[t,t^2]\text{ and }\varphi(b_t)=[t,-t^2]. 
$$
The segment $L_t:=\{[2-t,y]: |y|\leq t^2\}$ of length $2t^2$ has to be mapped by the extension to some continuum that connects $\varphi(a_t)$ and $\varphi(b_t)$ inside $\Omega_2$. However, the inner distance of $\varphi(a_t)$ and $\varphi(b_t)$ inside $\Omega_2$ is equal to $2\sqrt{t^2+t^4}$ and hence the Lipschitz constant of the extension needs to be at least
$$
\frac{2\sqrt{t^2+t^4}}{2t^2}=\frac{\sqrt{1+t^2}}{t}\text{ and this tends to }\infty\text{ as }t\to 0+.
$$
\end{proof}

We next give a simple characterization of curves that admit a H\"older parametrization, classifying all the Jordan curves for which our main theorem is applicable.  

\begin{definition} Let $p\in[1,\infty)$. A Jordan curve $\Gamma \subset \rr^2$ has finite $p$-content if there is a constant $C$ such that for every finite subdivision $x_1,x_2,\ldots, x_n \in \Gamma$ it holds that
\[\sum_{k=1}^{n-1} |x_{k+1} - x_k|^p \, \leq \, C.\]
The smallest such $C$ is denoted $|\Gamma|_p$.
\end{definition}

\begin{thm} Let $\alpha\in(0,1)$. A Jordan curve $\Gamma$ admits an $\alpha$-H\"older parametrization $\varphi: [0,1] \to \Gamma$ if and only if it has finite $\frac{1}{\alpha}$-content.
\end{thm}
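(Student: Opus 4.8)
The proof splits into the two implications; I would treat the forward direction as a short warm-up and put the work into the reverse one.

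\emph{Necessity.} Suppose $\varphi\colon[0,1]\to\Gamma$ is $\alpha$-H\"older with constant $C$ and let $x_1,\dots,x_n\in\Gamma$ be a subdivision, say $x_k=\varphi(t_k)$ with $t_k\in[0,1)$. Since the cyclic order of the $t_k$ on the circle agrees with $1,\dots,n$, after cutting the circle at $0$ their numerical order is a cyclic rotation of $(1,\dots,n)$; a short combinatorial check then gives $\sum_{k=1}^{n-1}|t_{k+1}-t_k|\le 2$. Using $1/\alpha\ge 1$ and $|t_{k+1}-t_k|\le 1$,
\[\sum_{k=1}^{n-1}|x_{k+1}-x_k|^{1/\alpha}\le C^{1/\alpha}\sum_{k=1}^{n-1}|t_{k+1}-t_k|^{1/\alpha}\le C^{1/\alpha}\sum_{k=1}^{n-1}|t_{k+1}-t_k|\le 2C^{1/\alpha},\]
so $\Gamma$ has finite $\tfrac1\alpha$-content.

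\emph{Sufficiency: setting up a reparametrization.} Assume $|\Gamma|_{1/\alpha}<\infty$, put $p:=1/\alpha\ge 1$, and fix any homeomorphic parametrization $\gamma\colon[0,1]\to\Gamma$, $\gamma(0)=\gamma(1)$. For $0\le s\le t\le 1$ let
\[V(s,t):=\sup\Big\{\,\sum_{k=0}^{m-1}|\gamma(r_{k+1})-\gamma(r_k)|^{p}\ :\ s=r_0<\dots<r_m=t\,\Big\}\]
be the $p$-variation of $\gamma$, and set $V(t):=V(0,t)$, $L:=V(1)$. Concatenating partitions of $[s,u]$ and $[u,t]$ shows $V(s,u)+V(u,t)\le V(s,t)$, i.e.\ $V$ is \emph{super}-additive; a direct comparison with the definition of the content gives $0<L\le 2\,|\Gamma|_{1/\alpha}<\infty$. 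Super-additivity yields for $0\le s<s'\le 1$ the crucial estimate
\[|\gamma(s')-\gamma(s)|^{p}\ \le\ V(s,s')\ \le\ V(s')-V(s),\]
which also shows that $V$ is strictly increasing (as $\gamma$ is injective on $[0,1)$ and nonconstant), with $V(0)=0$.

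\emph{The continuity lemma --- the main obstacle.} The construction below needs $V$ to be continuous, and I expect this to be the one genuinely delicate point. One shows $V(t_0,t_0+\delta)\to 0$ and $V(t_0-\delta,t_0)\to 0$ as $\delta\to 0^+$ for each $t_0$. If, say, $V(t_0,t_0+\delta)\ge c>0$ for all small $\delta$, choose inductively $\delta_1>\delta_2>\dots\to 0$ together with partitions $P_j$ of $[t_0,t_0+\delta_j]$ of $p$-variation mass $>c/2$, arranging that $\delta_{j+1}$ is smaller than the distance from $t_0$ to the first interior node of $P_j$ and (by continuity of $\gamma$ at $t_0$) that the first edge of $P_j$ carries mass $\le c/4$; then the nodes of $P_1,\dots,P_N$ concatenate into a single partition of $[t_0,t_0+\delta_1]$ of $p$-variation at least $Nc/4$, contradicting $V(0,1)<\infty$ as $N\to\infty$. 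Finally, bounding for a partition of $[0,t_0+\delta]$ the one edge that straddles $t_0$ by $|\gamma(t_0)-\gamma(\cdot)|^{p}$ up to an error controlled by the modulus of continuity of $\gamma$ together with the elementary inequality $(A+B)^{p}-A^{p}\le p(A+B)^{p-1}B$ upgrades the above to continuity of $t\mapsto V(t)$ on $[0,1]$. (Alternatively one may invoke the known fact that the $p$-variation function of a continuous path is continuous.)

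\emph{Conclusion.} By the lemma $V\colon[0,1]\to[0,L]$ is a continuous strictly increasing bijection, hence a homeomorphism; define $\varphi\colon[0,1]\to\Gamma$ by $\varphi(r):=\gamma\big(V^{-1}(rL)\big)$, a parametrization of $\Gamma$ since $V^{-1}(L\,\cdot)$ is a homeomorphism of $[0,1]$ fixing $0$ and $1$. For $r<r'$ put $s:=V^{-1}(rL)$, $s':=V^{-1}(r'L)$, so $V(s')-V(s)=(r'-r)L$, and the crucial estimate gives
\[|\varphi(r)-\varphi(r')|^{p}=|\gamma(s)-\gamma(s')|^{p}\le V(s')-V(s)=L\,(r'-r),\]
hence $|\varphi(r)-\varphi(r')|\le L^{\alpha}\,|r-r'|^{\alpha}$. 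Thus $\varphi$ is an $\alpha$-H\"older parametrization of $\Gamma$, which completes the proof.
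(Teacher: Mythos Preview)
Your proof is correct and follows the same strategy as the paper's: build the increasing function given by the $p$-content/$p$-variation of the initial parametrization, invert it, and read off the H\"older bound from super-additivity. You are in fact more careful than the paper, which asserts the exact additivity $|\Gamma|_p=|\gamma|_p+|\Gamma\setminus\gamma|_p$ --- this fails for $p>1$ (split a straight segment at its midpoint: the left side is $2^p$, the right side is $2$) --- and then uses it to declare $F$ a surjection without further comment; your explicit continuity argument for $V$ is precisely what is needed to make that inversion step legitimate, and the paper's final estimate also only requires the super-additive inequality you prove, not equality.
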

\begin{proof} The 'only if' part of this statement is trivial, so we will only prove here that a curve with finite $\frac{1}{\alpha}$-content admits such a parametrization.

Let $p = \alpha^{-1}$. Note that any subcurve $\gamma \subset \Gamma$ also has finite $p$-content, and it is simple to verify that
\begin{equation}\label{eq:spliteq1}|\Gamma|_p \geq |\gamma|_p + |\Gamma \setminus \gamma|_p.\end{equation}
Hence given an initial parametrization $\varphi_0 : [0,1] \to \Gamma$, the map $F(t) = |\varphi_0([0,t])|_p$ defines an increasing surjection from $[0,1]$ to $[0,|\Gamma|_p]$. By setting
\[\varphi(t) = \varphi_0(F^{-1}(t))\]
we obtain a parametrization $\varphi: [0,|\Gamma|_p] \to \Gamma$ which satisfies
\begin{equation}\label{eq:spliteq2}|\varphi([0,t])|_p = t.\end{equation} Moreover, by the definition of the $p$-content $|\varphi([t_1,t_2])|_p$ and combining \eqref{eq:spliteq1} and \eqref{eq:spliteq2} we have
\[|\varphi(t_2) - \varphi(t_1)| \leq |\varphi([t_1,t_2])|_p^{\alpha} \leq \left(|\varphi([0,t_2])|_p-|\varphi([0,t_1])|_p\right)^{\alpha} = (t_2-t_1)^\alpha.\]
Hence $\varphi$ is $\alpha$-H\"older continuous. The map $t \to \varphi(t|\Gamma|_p)$ also gives such a parametrization on $[0,1]$ instead with H\"older-constant at most $|\Gamma|_p^\alpha$.
\end{proof}

\section{Proof of the H\"older extension result}

\begin{proof}[Proof of Theorem \ref{main}]
We are hence given an $\alpha$-H\"older parametrization $\varphi: \partial \dd \to \partial \yy$ of the boundary of a Jordan domain $\yy$ and we want to extend it to a H\"older-continuous homeomorphism $H: \dd \to \yy$. 
\vskip 5pt
\textbf{Step 1. Suitable decomposition of the domain and target.} 
To construct the extension we will employ a suitable decomposition of the target and domain sides. We will split the unit disk $\dd$ into dyadic regions $U_{n,k}$, each with diameter at most $2^{-n}$, and also split the domain side into appropriate regions $V_{n,k}$ depending on the behaviour of the boundary map $\varphi$. The aim is to define an extension $h : U_{n,k} \to V_{n,k}$ in each region and to show that it is H\"older-continuous with uniform constant $C$ in each region.

At a later stage of the proof, we will be able to use the dyadic structure of the sets $U_{n,k}$ to conclude that the extension $h$ will in fact be H\"older-continuous in the whole unit disk. The initial extension $h$ will not be a homeomorphism as it will map some open sets onto curves and map some of the interior of $\partial \dd$ onto the boundary $\partial \yy$, but it will define a monotone map (preimage of each point is connected) and we will take care in the construction in order for there to be an injectification process at the end where $h$ may be modified in an arbitrarily small way to give a homeomorphic extension $H: \dd \to \yy$ of $\varphi$. The fact that this modification is as small as we wish (on each dyadic level as well) is going to let us argue that $H$ inherits the H\"older-continuity estimates from $h$.

We start by constructing the dyadic sets $U_{n,k}$. We first split the unit circle $\partial \dd$ into dyadic arcs $I_{n,k}$, $n = 1,2,\ldots$ and $k = 1,\ldots,2^n$. The  amount of initial arcs or their position does not matter. For the purposes of simplifying the notation here, we may suppose that the arcs $I_{n,k}$ are instead intervals on the real line by flattening out the circle locally. The disk $\dd$ may be locally interpreted as the upper half space for this purpose.

We now let $T_{n,k}$ be the isosceles triangle in the upper half space with base $I_{n,k}$ and two equal angles of size $\pi/4 + 100^{-n}$ against the base. The apex vertex of $T_{n,k}$ is denoted $p_{n,k}$ and the endpoints of $I_{n,k}$ are denoted by $a_{n,k}$ and $b_{n,k}$. If we suppose that $I_{n+1,k'}$ and $I_{n+1,k'+1}$ are the two dyadic children of $I_{n,k}$, then we let $U_{n,k} = T_{n,k} \setminus \left(I_{n+1,k'} \cup I_{n+1,k'+1}\right)$ denote the region between each successive dyadic parent triangle and its children 
(see Fig. \ref{fig:domain1} for the idea how $U=U_{n,k}$ looks like).

We now wish to explain how the sets $V_{n,k}$ will be defined. Given the boundary map $\varphi$, we let $A_{n,k} = \varphi(a_{n,k})$ and $B_{n,k} = \varphi(b_{n,k})$. Moreover, let $C_{n,k}$ be the image of the middle point of $I_{n,k}$ under $\varphi$. We now may need to make a certain adjustment to this construction.

Let us call a point $\omega \in \partial \yy$ a \emph{good point} if there exists a line segment inside $\yy$ with endpoint $\omega$. We wish to assume that the dyadic image points given by $A_{n,k}$ are good points for all $n,k$. If this is not the case, we argue as follows. For every point $\omega \in \partial \yy$, it is possible to choose a good point $\omega' \in \partial \yy$ arbitrarily close to $\omega$ by considering a point $P \in \yy$ very close to $\omega$, considering the largest disk of center $P$ which lies entirely within $\yy$, and picking $\omega'$ as an intersection point of $\partial \yy$ with the boundary of this disk. This way of defining $\omega'$ guarantees that it is a good point. As $P$ approaches $\omega$, it can be shown that $\omega'$ approaches $\omega$ as well.

By appropriately using this observation inductively, we may replace each point $A_{n,k}$ with a good point $A'_{n,k}$ so that the preimages of $A'_{n,k}$ under $\varphi$ form another decomposition of the unit circle into intervals $I'_{n,k}$ which have length uniformly comparable to the length of $I_{n,k}$ for all $n,k$. If this is the case, it is not difficult to construct a self-map of the unit disk which is bilipschitz and maps each $I_{n,k}$ to $I'_{n,k}$. Since composition by bilipschitz-maps does not affect H\"older-continuity estimates, we may as well suppose that the points $A_{n,k}$ were good points to begin with and not deal with additional notation.

Let $\Gamma_{n,k}$ denote the unique shortest curve from $A_{n,k}$ to $B_{n,k}$ inside $\overline{\yy}$. Moreover, let $\Gamma_{n,k}^+$ and $\Gamma_{n,k}^{-}$ denote the corresponding curves for the dyadic children in this case. The curves $\Gamma_{n,k}, \Gamma_{n,k}^+,$ and $\Gamma_{n,k}^{-}$ may overlap each other and the boundary $\partial \yy$ but not cross each other. The region $V_{n,k}$ will be defined as a set bounded by these three curves, but let us first fix a parametrization for each of them.
\vskip 5pt
\textbf{Step 2. Fixing a parametrization.} Let us now define a parametrization of the curve $\Gamma_{n,k}$ which will be used later to define the values of the extension $h$ of $\varphi$ in part of the boundary of $U_{n,k}$. The main goal here is to define this parametrization as a H\"older-continuous map from the interval $I_{n,k}$ to $\Gamma_{n,k}$.

For purposes of easier presentation, let us drop subscripts for a moment and denote $I = I_{n,k} = [a,b]$. Let $\Gamma \in \overline{\yy}$ be the shortest curve between $\varphi(a)$ and $\varphi(b)$. We wish to define a parametrization $\tau: I \to \Gamma$ by ''projecting'' the parametrization given by $\varphi$ on the boundary. Given a point $z \in I$, we let $\tau(z)$ be the point on $\Gamma$ which is closest to $\varphi(z)$ with respect to the internal distance in $\yy$ (see Fig. \ref{define}). Such a point is determined uniquely since the connected components of $\Gamma$ which do not touch $\varphi(I)$ must be locally concave towards $\varphi(I)$ due to $\Gamma$ being a shortest curve (see e.g. \cite[Step 5 of the proof of Theorem 2.1]{HP} for the properties of the shortest curves). This defines the parametrization $\tau$ as a monotone map from $I$ to $\Gamma$, with loss of injectivity being possible since the preimage of a single point on $\Gamma$ may be an interval in $I$ 
(for example points between $z_1$ and $z_2$ on Fig. \ref{define} are all mapped to the same points $\tau(z)$). We will now show that $\tau$ inherits the H\"older-continuity of $\varphi$. The lack of injectivity of $\tau$ will not pose a problem later as we can eventually injectify this map with an arbitrarily small change to the H\"older-constant. 

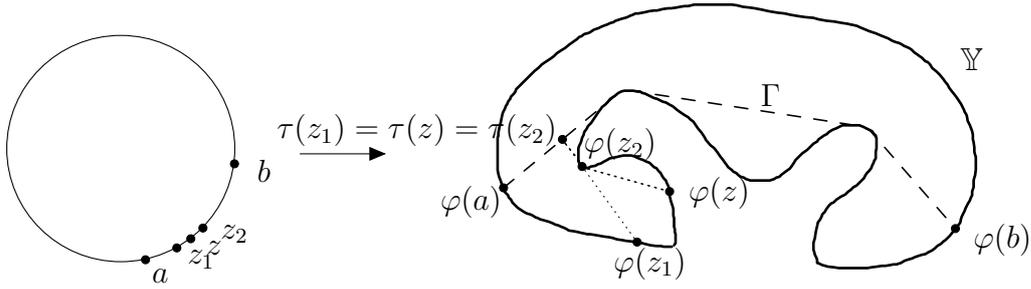
\begin{figure}
{
\vskip -60pt
\begin{tikzpicture}[line cap=round,line join=round,>=triangle 45,x=0.5cm,y=0.5cm]
\clip(-3.8302439024390234,-10.775121951219491) rectangle (29.06731707317073,6.0639024390243845);
\draw(0.4,-2.26) circle (1.5cm);
\draw [->] (5.12,-2.4) -- (7.38,-2.4);
\draw (0.9307317073170741,-5.155609756097549) node[anchor=north west] {$a$};
\draw (3.7014634146341474,-2.2678048780487727) node[anchor=north west] {$b$};
\draw (2.355121951219513,-4.433658536585355) node[anchor=north west] {$z$};
\draw [line width=1.0pt] (14.901463414634149,-4.862926829268281)-- (14.784390243902441,-4.862926829268281)-- (14.686829268292684,-4.862926829268281)-- (14.60878048780488,-4.84341463414633)-- (14.491707317073173,-4.84341463414633)-- (14.335609756097561,-4.823902439024378)-- (14.179512195121953,-4.7848780487804765)-- (14.042926829268294,-4.745853658536574)-- (13.867317073170733,-4.726341463414622)-- (13.691707317073172,-4.706829268292671)-- (13.594146341463418,-4.68731707317072)-- (13.457560975609757,-4.667804878048769)-- (13.340487804878052,-4.628780487804867)-- (13.242926829268296,-4.609268292682916)-- (13.086829268292686,-4.589756097560963)-- (12.98926829268293,-4.570243902439013)-- (12.872195121951222,-4.550731707317063)-- (12.755121951219515,-4.531219512195111)-- (12.67707317073171,-4.531219512195111)-- (12.599024390243905,-4.511707317073159)-- (12.501463414634149,-4.492195121951209)-- (12.423414634146342,-4.472682926829257)-- (12.325853658536587,-4.4531707317073055)-- (12.189268292682929,-4.433658536585355)-- (12.130731707317075,-4.394634146341452)-- (12.05268292682927,-4.355609756097549)-- (11.994146341463416,-4.336097560975599)-- (11.916097560975611,-4.297073170731697)-- (11.838048780487807,-4.277560975609745)-- (11.760000000000002,-4.238536585365844)-- (11.662439024390245,-4.19951219512194)-- (11.525853658536587,-4.16048780487804)-- (11.447804878048782,-4.1409756097560875)-- (11.369756097560977,-4.101951219512184)-- (11.311219512195123,-4.082439024390234)-- (11.25268292682927,-4.04341463414633)-- (11.174634146341466,-4.004390243902429)-- (11.09658536585366,-3.984878048780477)-- (11.018536585365856,-3.9458536585365755)-- (10.96,-3.9068292682926726)-- (10.901463414634149,-3.8678048780487697)-- (10.842926829268295,-3.8287804878048686)-- (10.764878048780488,-3.7507317073170627)-- (10.706341463414635,-3.692195121951209)-- (10.667317073170734,-3.6141463414634045)-- (10.608780487804879,-3.5556097560975513)-- (10.58926829268293,-3.4970731707316975)-- (10.511219512195124,-3.419024390243893)-- (10.491707317073173,-3.3604878048780398)-- (10.472195121951222,-3.301951219512186)-- (10.433170731707317,-3.243414634146332)-- (10.413658536585368,-3.165365853658528)-- (10.394146341463415,-3.106829268292674)-- (10.355121951219514,-3.009268292682918)-- (10.335609756097563,-2.9312195121951135)-- (10.316097560975612,-2.8336585365853573)-- (10.316097560975612,-2.7556097560975528)-- (10.316097560975612,-2.638536585365846)-- (10.316097560975612,-2.5214634146341384)-- (10.316097560975612,-2.3848780487804797)-- (10.316097560975612,-2.287317073170724)-- (10.316097560975612,-2.170243902439017)-- (10.316097560975612,-2.0531707317073096)-- (10.316097560975612,-1.975121951219505)-- (10.335609756097563,-1.9165853658536514)-- (10.335609756097563,-1.8385365853658469)-- (10.355121951219514,-1.7409756097560911)-- (10.374634146341464,-1.643414634146335)-- (10.394146341463415,-1.5848780487804814)-- (10.413658536585368,-1.4873170731707253)-- (10.433170731707317,-1.3897560975609695)-- (10.45268292682927,-1.3117073170731648)-- (10.511219512195124,-1.1946341463414578)-- (10.530731707317075,-1.136097560975604)-- (10.569756097560978,-1.0775609756097506)-- (10.58926829268293,-1.019024390243897)-- (10.647804878048783,-0.9409756097560923)-- (10.667317073170734,-0.8824390243902385)-- (10.745365853658539,-0.8043902439024339)-- (10.78439024390244,-0.7458536585365804)-- (10.803902439024393,-0.6873170731707267)-- (10.901463414634149,-0.5897560975609708)-- (10.96,-0.5312195121951172)-- (11.018536585365856,-0.47268292682926366)-- (11.07707317073171,-0.4141463414634102)-- (11.135609756097562,-0.3556097560975566)-- (11.23317073170732,-0.2580487804878006)-- (11.291707317073174,-0.19951219512194704)-- (11.38926829268293,-0.12146341463414229)-- (11.486829268292684,-0.02390243902438634)-- (11.54536585365854,0.01512195121951603)-- (11.603902439024392,0.054146341463418415)-- (11.662439024390245,0.09317073170732078)-- (11.7209756097561,0.13219512195122318)-- (11.799024390243904,0.19073170731707673)-- (11.877073170731709,0.26878048780488145)-- (11.935609756097563,0.3078048780487838)-- (11.994146341463416,0.3468292682926863)-- (12.091707317073173,0.4053658536585398)-- (12.20878048780488,0.5029268292682957)-- (12.267317073170735,0.5419512195121982)-- (12.345365853658539,0.6004878048780516)-- (12.40390243902439,0.6395121951219541)-- (12.462439024390246,0.6980487804878076)-- (12.5209756097561,0.7175609756097587)-- (12.599024390243905,0.7760975609756124)-- (12.67707317073171,0.7956097560975636)-- (12.735609756097562,0.8151219512195148)-- (12.794146341463415,0.8541463414634172)-- (12.85268292682927,0.8931707317073194)-- (12.911219512195123,0.9126829268292708)-- (12.969756097560976,0.9321951219512218)-- (13.047804878048783,0.9712195121951244)-- (13.145365853658538,1.0102439024390266)-- (13.203902439024393,1.0297560975609779)-- (13.281951219512196,1.06878048780488)-- (13.360000000000003,1.0882926829268313)-- (13.418536585365857,1.1273170731707336)-- (13.477073170731709,1.1468292682926848)-- (13.594146341463418,1.1858536585365873)-- (13.67219512195122,1.2053658536585385)-- (13.730731707317076,1.2248780487804898)-- (13.80878048780488,1.263902439024392)-- (13.867317073170733,1.2834146341463433)-- (13.945365853658538,1.3224390243902455)-- (14.023414634146345,1.3224390243902455)-- (14.1209756097561,1.3419512195121968)-- (14.218536585365857,1.361463414634148)-- (14.29658536585366,1.3809756097560992)-- (14.43317073170732,1.4004878048780505)-- (14.511219512195122,1.4004878048780505)-- (14.58926829268293,1.4200000000000017)-- (14.667317073170734,1.4395121951219527)-- (14.803902439024393,1.459024390243904)-- (14.8819512195122,1.459024390243904)-- (15.018536585365856,1.478536585365855)-- (15.116097560975614,1.4980487804878062)-- (15.233170731707318,1.5175609756097574)-- (15.330731707317076,1.5175609756097574)-- (15.40878048780488,1.5370731707317087)-- (15.486829268292684,1.55658536585366)-- (15.603902439024395,1.55658536585366)-- (15.760000000000003,1.55658536585366)-- (15.838048780487808,1.55658536585366)-- (15.916097560975611,1.55658536585366)-- (16.03317073170732,1.55658536585366)-- (16.111219512195124,1.55658536585366)-- (16.208780487804884,1.55658536585366)-- (16.36487804878049,1.55658536585366)-- (16.481951219512197,1.55658536585366)-- (16.618536585365856,1.5370731707317087)-- (16.696585365853664,1.5175609756097574)-- (16.774634146341466,1.4980487804878062)-- (16.950243902439027,1.478536585365855)-- (17.086829268292686,1.459024390243904)-- (17.281951219512198,1.4200000000000017)-- (17.45756097560976,1.3809756097560992)-- (17.691707317073174,1.3419512195121968)-- (17.867317073170735,1.3224390243902455)-- (18.042926829268296,1.2834146341463433)-- (18.277073170731708,1.244390243902441)-- (18.413658536585366,1.2248780487804898)-- (18.62829268292683,1.2053658536585385)-- (18.76487804878049,1.1663414634146363)-- (18.979512195121956,1.1273170731707336)-- (19.174634146341468,1.1078048780487826)-- (19.389268292682928,1.06878048780488)-- (19.52585365853659,1.049268292682929)-- (19.60390243902439,1.0297560975609779)-- (19.70146341463415,1.0102439024390266)-- (19.779512195121956,0.9907317073170754)-- (19.857560975609758,0.951707317073173)-- (19.955121951219517,0.9126829268292708)-- (20.091707317073173,0.8541463414634172)-- (20.20878048780488,0.8151219512195148)-- (20.267317073170737,0.7956097560975636)-- (20.403902439024392,0.73707317073171)-- (20.50146341463415,0.6980487804878076)-- (20.599024390243905,0.6590243902439052)-- (20.69658536585366,0.6004878048780516)-- (20.794146341463417,0.5614634146341493)-- (20.872195121951222,0.5419512195121982)-- (20.950243902439027,0.5029268292682957)-- (21.00878048780488,0.4639024390243934)-- (21.14536585365854,0.4053658536585398)-- (21.20390243902439,0.3663414634146374)-- (21.2819512195122,0.2882926829268327)-- (21.34048780487805,0.24926829268293033)-- (21.41853658536586,0.19073170731707673)-- (21.47707317073171,0.15170731707317434)-- (21.574634146341467,0.054146341463418415)-- (21.63317073170732,0.03463414634146722)-- (21.69170731707317,-0.02390243902438634)-- (21.750243902439028,-0.12146341463414229)-- (21.86731707317073,-0.21902439024389822)-- (21.94536585365854,-0.297073170731703)-- (22.042926829268293,-0.4141463414634102)-- (22.0819512195122,-0.47268292682926366)-- (22.160000000000004,-0.5702439024390196)-- (22.199024390243906,-0.6482926829268244)-- (22.25756097560976,-0.7263414634146291)-- (22.335609756097565,-0.8239024390243851)-- (22.39414634146342,-0.9019512195121899)-- (22.43317073170732,-0.9604878048780433)-- (22.47219512195122,-1.019024390243897)-- (22.511219512195122,-1.0775609756097506)-- (22.55024390243903,-1.1556097560975551)-- (22.58926829268293,-1.2336585365853598)-- (22.608780487804882,-1.2921951219512136)-- (22.64780487804878,-1.350731707317067)-- (22.667317073170732,-1.4092682926829208)-- (22.686829268292684,-1.526341463414628)-- (22.74536585365854,-1.643414634146335)-- (22.803902439024395,-1.7995121951219446)-- (22.823414634146342,-1.8970731707317003)-- (22.823414634146342,-2.0141463414634075)-- (22.842926829268293,-2.092195121951212)-- (22.862439024390245,-2.2287804878048707)-- (22.862439024390245,-2.345853658536578)-- (22.8819512195122,-2.423902439024382)-- (22.90146341463415,-2.482439024390236)-- (22.90146341463415,-2.560487804878041)-- (22.90146341463415,-2.6580487804877966)-- (22.90146341463415,-2.736097560975602)-- (22.90146341463415,-2.814146341463406)-- (22.8819512195122,-2.970243902439016)-- (22.862439024390245,-3.0287804878048696)-- (22.842926829268293,-3.106829268292674)-- (22.842926829268293,-3.20439024390243)-- (22.842926829268293,-3.2824390243902344)-- (22.823414634146342,-3.340975609756088)-- (22.803902439024395,-3.399512195121942)-- (22.803902439024395,-3.4970731707316975)-- (22.764878048780492,-3.5751219512195034)-- (22.74536585365854,-3.6336585365853553)-- (22.686829268292684,-3.731219512195112)-- (22.628292682926833,-3.848292682926819)-- (22.608780487804882,-3.9068292682926726)-- (22.56975609756098,-3.984878048780477)-- (22.530731707317074,-4.0629268292682825)-- (22.511219512195122,-4.121463414634136)-- (22.491707317073175,-4.2190243902438915)-- (22.452682926829272,-4.277560975609745)-- (22.39414634146342,-4.336097560975599)-- (22.355121951219513,-4.394634146341452)-- (22.29658536585366,-4.472682926829257)-- (22.218536585365857,-4.570243902439013)-- (22.10146341463415,-4.68731707317072)-- (22.042926829268293,-4.726341463414622)-- (21.984390243902443,-4.7848780487804765)-- (21.886829268292686,-4.84341463414633)-- (21.828292682926833,-4.901951219512184)-- (21.750243902439028,-4.960487804878038)-- (21.63317073170732,-5.019024390243891)-- (21.574634146341467,-5.058048780487794)-- (21.47707317073171,-5.077560975609745)-- (21.41853658536586,-5.097073170731695)-- (21.30146341463415,-5.136097560975597)-- (21.223414634146344,-5.1751219512194995)-- (21.14536585365854,-5.194634146341452)-- (21.028292682926832,-5.233658536585353)-- (20.93073170731708,-5.292195121951207)-- (20.81365853658537,-5.331219512195109)-- (20.735609756097563,-5.331219512195109)-- (20.638048780487807,-5.350731707317061)-- (20.5209756097561,-5.370243902439011)-- (20.423414634146344,-5.370243902439011)-- (20.306341463414636,-5.389756097560963)-- (20.15024390243903,-5.409268292682914)-- (20.05268292682927,-5.428780487804866)-- (19.955121951219517,-5.428780487804866)-- (19.87707317073171,-5.428780487804866)-- (19.74048780487805,-5.428780487804866)-- (19.66243902439025,-5.428780487804866)-- (19.52585365853659,-5.428780487804866)-- (19.44780487804878,-5.428780487804866)-- (19.291707317073175,-5.428780487804866)-- (19.174634146341468,-5.409268292682914)-- (18.999024390243907,-5.389756097560963)-- (18.94048780487805,-5.370243902439011)-- (18.862439024390248,-5.350731707317061)-- (18.78439024390244,-5.311707317073159)-- (18.725853658536586,-5.292195121951207)-- (18.725853658536586,-5.214146341463403)-- (18.70634146341464,-5.155609756097549)-- (18.667317073170732,-5.077560975609745)-- (18.647804878048785,-4.999512195121939)-- (18.62829268292683,-4.940975609756086)-- (18.62829268292683,-4.84341463414633)-- (18.62829268292683,-4.765365853658524)-- (18.667317073170732,-4.628780487804867)-- (18.686829268292687,-4.492195121951209)-- (18.70634146341464,-4.394634146341452)-- (18.725853658536586,-4.316585365853648)-- (18.74536585365854,-4.2190243902438915)-- (18.803902439024395,-4.1409756097560875)-- (18.842926829268293,-4.0629268292682825)-- (18.862439024390248,-4.004390243902429)-- (18.920975609756102,-3.9458536585365755)-- (18.979512195121956,-3.848292682926819)-- (19.03804878048781,-3.770243902439015)-- (19.096585365853663,-3.711707317073161)-- (19.155121951219517,-3.692195121951209)-- (19.233170731707318,-3.6141463414634045)-- (19.35024390243903,-3.5360975609756005)-- (19.467317073170737,-3.4775609756097468)-- (19.545365853658538,-3.399512195121942)-- (19.60390243902439,-3.3604878048780398)-- (19.681951219512197,-3.2824390243902344)-- (19.76,-3.243414634146332)-- (19.818536585365855,-3.184878048780479)-- (19.87707317073171,-3.1263414634146254)-- (19.955121951219517,-3.0482926829268204)-- (19.994146341463416,-2.970243902439016)-- (20.03317073170732,-2.911707317073162)-- (20.091707317073173,-2.8531707317073085)-- (20.15024390243903,-2.775121951219504)-- (20.169756097560978,-2.7165853658536503)-- (20.20878048780488,-2.638536585365846)-- (20.22829268292683,-2.579999999999992)-- (20.267317073170737,-2.5214634146341384)-- (20.267317073170737,-2.4434146341463334)-- (20.267317073170737,-2.345853658536578)-- (20.286829268292685,-2.248292682926822)-- (20.286829268292685,-2.1507317073170658)-- (20.286829268292685,-2.0726829268292613)-- (20.286829268292685,-1.9946341463414565)-- (20.267317073170737,-1.9165853658536514)-- (20.20878048780488,-1.8580487804877979)-- (20.169756097560978,-1.7995121951219446)-- (20.091707317073173,-1.7604878048780421)-- (20.01365853658537,-1.7214634146341397)-- (19.935609756097563,-1.6824390243902372)-- (19.83804878048781,-1.643414634146335)-- (19.70146341463415,-1.643414634146335)-- (19.58439024390244,-1.643414634146335)-- (19.486829268292688,-1.643414634146335)-- (19.389268292682928,-1.643414634146335)-- (19.311219512195127,-1.6824390243902372)-- (19.233170731707318,-1.7214634146341397)-- (19.135609756097566,-1.7604878048780421)-- (19.03804878048781,-1.8190243902438956)-- (18.979512195121956,-1.8775609756097493)-- (18.901463414634147,-1.9165853658536514)-- (18.76487804878049,-2.0141463414634075)-- (18.725853658536586,-2.0726829268292613)-- (18.62829268292683,-2.131219512195115)-- (18.530731707317077,-2.2287804878048707)-- (18.47219512195122,-2.287317073170724)-- (18.39414634146342,-2.345853658536578)-- (18.335609756097565,-2.3848780487804797)-- (18.277073170731708,-2.4434146341463334)-- (18.218536585365857,-2.501951219512187)-- (18.160000000000004,-2.560487804878041)-- (18.101463414634146,-2.6580487804877966)-- (18.062439024390248,-2.7165853658536503)-- (18.003902439024394,-2.794634146341455)-- (17.94536585365854,-2.8336585365853573)-- (17.867317073170735,-2.892195121951211)-- (17.789268292682927,-2.9507317073170642)-- (17.730731707317076,-3.009268292682918)-- (17.65268292682927,-3.0287804878048696)-- (17.594146341463418,-3.0678048780487717)-- (17.535609756097564,-3.087317073170723)-- (17.47707317073171,-3.106829268292674)-- (17.379512195121954,-3.106829268292674)-- (17.30146341463415,-3.106829268292674)-- (17.203902439024393,-3.106829268292674)-- (17.106341463414637,-3.106829268292674)-- (16.96975609756098,-3.106829268292674)-- (16.85268292682927,-3.0678048780487717)-- (16.794146341463417,-3.0287804878048696)-- (16.716097560975612,-3.009268292682918)-- (16.657560975609755,-2.970243902439016)-- (16.599024390243905,-2.911707317073162)-- (16.520975609756096,-2.8726829268292597)-- (16.462439024390246,-2.814146341463406)-- (16.384390243902445,-2.7556097560975528)-- (16.345365853658535,-2.6970731707316995)-- (16.286829268292685,-2.619024390243894)-- (16.247804878048786,-2.560487804878041)-- (16.169756097560978,-2.462926829268285)-- (16.091707317073173,-2.345853658536578)-- (16.013658536585368,-2.248292682926822)-- (15.916097560975611,-2.170243902439017)-- (15.799024390243904,-2.033658536585359)-- (15.760000000000003,-1.955609756097554)-- (15.720975609756099,-1.8970731707317003)-- (15.681951219512197,-1.8385365853658469)-- (15.584390243902439,-1.7214634146341397)-- (15.506341463414637,-1.6043902439024325)-- (15.467317073170735,-1.545853658536579)-- (15.38926829268293,-1.4873170731707253)-- (15.330731707317076,-1.428780487804872)-- (15.233170731707318,-1.3702439024390183)-- (15.116097560975614,-1.2921951219512136)-- (15.038048780487808,-1.2336585365853598)-- (14.9209756097561,-1.136097560975604)-- (14.842926829268295,-1.0970731707317016)-- (14.745365853658539,-1.0385365853658481)-- (14.667317073170734,-1.019024390243897)-- (14.58926829268293,-0.9799999999999945)-- (14.511219512195122,-0.9409756097560923)-- (14.43317073170732,-0.9214634146341409)-- (14.277073170731708,-0.8434146341463362)-- (14.199024390243904,-0.8043902439024339)-- (14.1209756097561,-0.7848780487804826)-- (14.062439024390246,-0.7458536585365804)-- (13.98439024390244,-0.7458536585365804)-- (13.886829268292683,-0.7263414634146291)-- (13.80878048780488,-0.7263414634146291)-- (13.730731707317076,-0.7263414634146291)-- (13.652682926829272,-0.7263414634146291)-- (13.574634146341467,-0.7458536585365804)-- (13.49658536585366,-0.7848780487804826)-- (13.399024390243904,-0.8629268292682873)-- (13.340487804878052,-0.9019512195121899)-- (13.281951219512196,-0.9409756097560923)-- (13.203902439024393,-1.0385365853658481)-- (13.145365853658538,-1.0970731707317016)-- (13.067317073170733,-1.1556097560975551)-- (13.028292682926832,-1.214146341463409)-- (12.969756097560976,-1.2921951219512136)-- (12.930731707317076,-1.350731707317067)-- (12.85268292682927,-1.4678048780487742)-- (12.813658536585368,-1.526341463414628)-- (12.774634146341466,-1.5848780487804814)-- (12.696585365853661,-1.662926829268286)-- (12.657560975609758,-1.7409756097560911)-- (12.599024390243905,-1.8190243902438956)-- (12.579512195121952,-1.8970731707317003)-- (12.540487804878051,-1.975121951219505)-- (12.5209756097561,-2.0531707317073096)-- (12.501463414634149,-2.1117073170731633)-- (12.462439024390246,-2.1897560975609682)-- (12.442926829268297,-2.248292682926822)-- (12.442926829268297,-2.345853658536578)-- (12.442926829268297,-2.423902439024382)-- (12.442926829268297,-2.5214634146341384)-- (12.442926829268297,-2.5995121951219433)-- (12.481951219512197,-2.677560975609748)-- (12.501463414634149,-2.736097560975602)-- (12.560000000000002,-2.7556097560975528)-- (12.618536585365856,-2.775121951219504)-- (12.696585365853661,-2.775121951219504)-- (12.774634146341466,-2.736097560975602)-- (12.891707317073172,-2.7165853658536503)-- (12.950243902439025,-2.677560975609748)-- (13.028292682926832,-2.6580487804877966)-- (13.125853658536586,-2.5995121951219433)-- (13.203902439024393,-2.560487804878041)-- (13.262439024390247,-2.5214634146341384)-- (13.360000000000003,-2.501951219512187)-- (13.457560975609757,-2.482439024390236)-- (13.535609756097564,-2.462926829268285)-- (13.633170731707319,-2.4434146341463334)-- (13.730731707317076,-2.4434146341463334)-- (13.828292682926831,-2.4434146341463334)-- (13.906341463414638,-2.462926829268285)-- (13.98439024390244,-2.501951219512187)-- (14.081951219512199,-2.5409756097560896)-- (14.140487804878052,-2.560487804878041)-- (14.218536585365857,-2.5995121951219433)-- (14.316097560975612,-2.638536585365846)-- (14.374634146341466,-2.6970731707316995)-- (14.43317073170732,-2.7556097560975528)-- (14.491707317073173,-2.775121951219504)-- (14.550243902439027,-2.8531707317073085)-- (14.60878048780488,-2.911707317073162)-- (14.647804878048781,-2.9897560975609667)-- (14.686829268292684,-3.0482926829268204)-- (14.745365853658539,-3.106829268292674)-- (14.764878048780488,-3.165365853658528)-- (14.803902439024393,-3.243414634146332)-- (14.823414634146342,-3.301951219512186)-- (14.842926829268295,-3.3604878048780398)-- (14.842926829268295,-3.438536585365844)-- (14.842926829268295,-3.5165853658536497)-- (14.862439024390245,-3.594634146341454)-- (14.8819512195122,-3.6726829268292582)-- (14.901463414634149,-3.731219512195112)-- (14.9209756097561,-3.7897560975609657)-- (14.94048780487805,-3.8678048780487697)-- (14.94048780487805,-3.9653658536585263)-- (14.94048780487805,-4.04341463414633)-- (14.960000000000003,-4.101951219512184)-- (14.960000000000003,-4.17999999999999)-- (14.960000000000003,-4.258048780487794)-- (14.960000000000003,-4.336097560975599)-- (14.979512195121954,-4.394634146341452)-- (14.979512195121954,-4.472682926829257)-- (14.999024390243903,-4.531219512195111)-- (14.999024390243903,-4.609268292682916)-- (14.999024390243903,-4.68731707317072)-- (14.999024390243903,-4.765365853658524)-- (14.960000000000003,-4.823902439024378);
\draw (22.199024390243903,0.6785365853658566) node[anchor=north west] {$\mathbb{Y}$};
\draw (8.520975609756099,-2.9702439024390155) node[anchor=north west] {$\varphi(a)$};
\draw (22.550243902439025,-3.9653658536585263) node[anchor=north west] {$\varphi(b)$};
\draw (15.057560975609757,-2.7360975609756015) node[anchor=north west] {$\varphi(z)$};
\draw [line width=0.6000000000000002pt,dash pattern=on 5pt off 5pt] (10.472195121951222,-3.301951219512186)-- (13.522665080309345,-0.7718381915526407)-- (13.836859012492566,-0.7263414634146291)-- (19.719071980963722,-1.643414634146335)-- (20.099922207477235,-1.7645952500800728)-- (22.36376149727722,-4.38167482725489);
\draw (16.950243902439027,-0.25804878048780044) node[anchor=north west] {$\Gamma$};
\draw [line width=0.6000000000000002pt,dotted] (14.842926829268293,-3.399512195121942)-- (12.617251635930996,-2.7746936347412174)-- (12.014891982569996,-2.0224115162775083);
\draw (4.220975609756098,-1.0580487804877992) node[anchor=north west] {$\tau(z_1)=\tau(z)=\tau(z_2)$};
\draw [dotted] (12.021848377897557,-2.016641760572269)-- (13.987245687091018,-4.739666864961321);
\draw (1.8673170731707327,-4.667804878048769) node[anchor=north west] {$z_1$};
\draw (2.7453658536585377,-3.9848780487804776) node[anchor=north west] {$z_2$};
\draw (13.086829268292684,-4.589756097560964) node[anchor=north west] {$\varphi(z_1)$};
\draw (12.301463414634147,-1.4214634146341397) node[anchor=north west] {$\varphi(z_2)$};
\begin{scriptsize}
\draw [fill=black] (1.0506510866749004,-5.208262736495643) circle (1.5pt);
\draw [fill=black] (3.392009492538974,-2.6643256070998618) circle (1.5pt);
\draw [fill=black] (2.2427360880089666,-4.651636199330787) circle (1.5pt);
\draw [fill=black] (10.472195121951222,-3.301951219512186) circle (1.5pt);
\draw [fill=black] (22.36376149727722,-4.38167482725489) circle (1.5pt);
\draw [fill=black] (14.842926829268293,-3.399512195121942) circle (1.5pt);
\draw [fill=black] (12.021848377897557,-2.016641760572269) circle (1.5pt);
\draw [fill=black] (1.8742385926147387,-4.894809399566753) circle (1.5pt);
\draw [fill=black] (2.561945688124724,-4.367508206769054) circle (1.5pt);
\draw [fill=black] (13.987245687091018,-4.739666864961321) circle (1.5pt);
\draw [fill=black] (12.540909940964609,-2.735792875893081) circle (1.5pt);
\end{scriptsize}
\end{tikzpicture}
\vskip -70pt	
}
\caption{Definition of the curve $\Gamma$ and its parametrization $\tau$. The whole part between $z_1$ and $z_2$ is mapped to the same point $\tau(z)$.}\label{define}
\end{figure}

Let $x,y \in I$ and let $\beta_{xy}$ denote the part of $\varphi(I)$ between $\varphi(x)$ and $\varphi(y)$. It will be enough to show that the inequality
\begin{equation}\label{eq:tau_ineq}
|\tau(x)-\tau(y)| \leq C \diam(\beta_{xy})
\end{equation} holds, since the diameter of $\beta_{xy}$ satisfies $\diam(\beta_{xy}) \leq C|x-y|^\alpha$ due to the H\"older-continuity of $\varphi$ on all subintervals.

We may suppose that the shortest curves $\gamma_x$ and $\gamma_y$ which connect $\varphi(x)$ to $\tau(x)$ and $\varphi(y)$ to $\tau(y)$ respectively within $\overline{\yy}$ are in fact straight line segments. This is due to the following fact. Let $I_x$ denote the preimage of $\tau(x)$ under $\tau$, which is either a single point or a closed interval containing $x$. Since the closest point on $\Gamma$ w.r.t. the internal distance from $\varphi(x')$ is $\tau(x)$ for all $x' \in I_x$, by geometry we see that if $x'$ is one of the endpoints of $I_x$ such a shortest curve must be a straight line segment between $\varphi(x')$ and $\tau(x)$ (for example on Fig. \ref{define} our $I_z$ is between $z_1$ and $z_2$ 
and shortest line between $\phi(z_1)$ and $\tau(z_1)$ or $\phi(z_2)$ and $\tau(z_2)$ is a segment). Let $x'$ be the endpoint of $I_x$ which is closest to $y$. Since $\beta_{x'y} \subset \beta_{xy}$, it is enough to prove \eqref{eq:tau_ineq} for $x'$ instead of $x$ because $\tau(x') = \tau(x)$. Hence we suppose that $\gamma_x$ and $\gamma_y$ are straight line segments.

Let $\Gamma'$ denote the part of $\Gamma$ between $\tau(x)$ and $\tau(y)$. We claim that it is enough to prove \eqref{eq:tau_ineq} when the interior of $\Gamma'$ does not touch the curve $\varphi(I)$. Indeed, if the interior does touch $\varphi(I)$ then it must be at a point on $\beta_{xy}$ as the part of $\varphi(I)$ not equal to $\beta_{xy}$ cannot cross $\gamma_x,\gamma_y$ or $\Gamma$ and hence has no access to $\Gamma'$. Supposing that \eqref{eq:tau_ineq} has been proven when the interior of $\Gamma'$ does not intersect $\beta_{xy}$, we explain now how the general case would follow. For this, note that if $u,v \in \Gamma' \cap \beta_{xy}$, then trivially $|u-v| \leq \diam(\beta_{xy})$. Hence by picking $u,v$ on $\Gamma'$ as close to the endpoints $\tau(x)$ and $\tau(y)$ as possible (with respect to parametrization), we may split $\Gamma'$ into three parts: One part from $\tau(x)$ to $u$, one part from $u$ to $v$, and one part from $v$ to $\tau(y)$. For clarification, it is possible for any of these parts to be a singleton as well. If $\tau(x)$ and $u$ are not equal, by definition of $u$ there cannot be a point on $\beta_{xy}$ intersecting the part of $\Gamma'$ between $\tau(x)$ and $u$. But now by assumption we can apply \eqref{eq:tau_ineq} in this part and obtain that $|\tau(x) - u| \leq C \diam(\beta_{xy})$. Similarly $|v - \tau(y)| \leq C \diam(\beta_{xy})$, and combining these estimates with the one for $|u-v|$ gives \eqref{eq:tau_ineq} while only increasing the constant by a factor $3$.

Hence we may suppose that the interior of $\Gamma'$ does not intersect $\beta_{xy}$. Since $\Gamma'$ is a shortest curve and does not intersect $\gamma_x$ or $\gamma_y$ beside the mutual endpoints, this means that $\Gamma'$ is locally concave towards the interior of the region $\Omega$ bounded by the four curves $\beta_{xy}, \gamma_x, \Gamma',$ and $\gamma_y$ (see e.g. \cite[Step 5 of the proof of Theorem 2.1]{HP}). For the sake of contradiction in proving \eqref{eq:tau_ineq}, let us suppose that $|\tau(x)-\tau(y)| > 100 \diam(\beta_{xy})$. Now if both $|\tau(x) - \varphi(x)|$ and $|\tau(y) - \varphi(y)|$ are smaller than $40 \diam(\beta_{xy})$, then
\[|\tau(x)-\tau(y)| \leq |\tau(x) - \varphi(x)| + |\varphi(x) - \varphi(y)| + |\tau(y) - \varphi(y)| \leq 81 \diam(\beta_{xy}),\]
a contradiction. Hence we may suppose without loss of generality that $|\tau(x) - \varphi(x)| > 40 \diam(\beta_{xy})$. 

Let us cover $\beta_{xy}$ by a ball $B$ of radius $2 \diam(\beta_{xy})$ (exact positioning does not matter here). Hence the line segment $\gamma_x$ starts inside of $B$ and ends outside of $B$. We now note that $\gamma_y$ must also do the same, as otherwise $\gamma_y$ would have no intersection point with $\partial B$ and $\Gamma'$ would have to intersect $\partial B$ somewhere. This would give a contradiction with the definition of $\tau(x)$: To connect $\phi(x)$ with $\Gamma$ it would be shorter to go from $x$ to a point in $\gamma_x\cap \partial B$ and then along $\partial B$ to the point in $\Gamma'\cap\partial B$ and the total length would be shorter than $40 \diam(\beta_{xy})$, giving us a contradiction with the definition of $\tau(x)$ and $|\tau(x) - \varphi(x)| > 40 \diam(\beta_{xy})$. 

Consider now a line $\ell$ picked so that it passes through $\tau(x)$ but the sets $\gamma_x, \gamma_y,$ and $B$ all lie on one (closed) side of $\ell$ (for example in Fig. \ref{fig:contradiction} our $\ell$ could be between $\tau(x)$ and $\tau(y)$ on the first picture and between $\tau(x)$ and $P$ on the second picture). We call this side the 'good side' just to have a name for it. It is not difficult to see that also $\Gamma'$ lies inside this good side of $\ell$ since $\Gamma'$ is concave towards $\beta_{xy}$ by the property of minimal curves. Of course it could happen that $\Gamma'$ contains both $\tau(x)$ and $\tau(y)$ and $\Gamma'$ is a straight segment and then it lies inside $\gamma$. 

\begin{figure}[htbp]
\centering
\includegraphics[width=0.7\textwidth]
{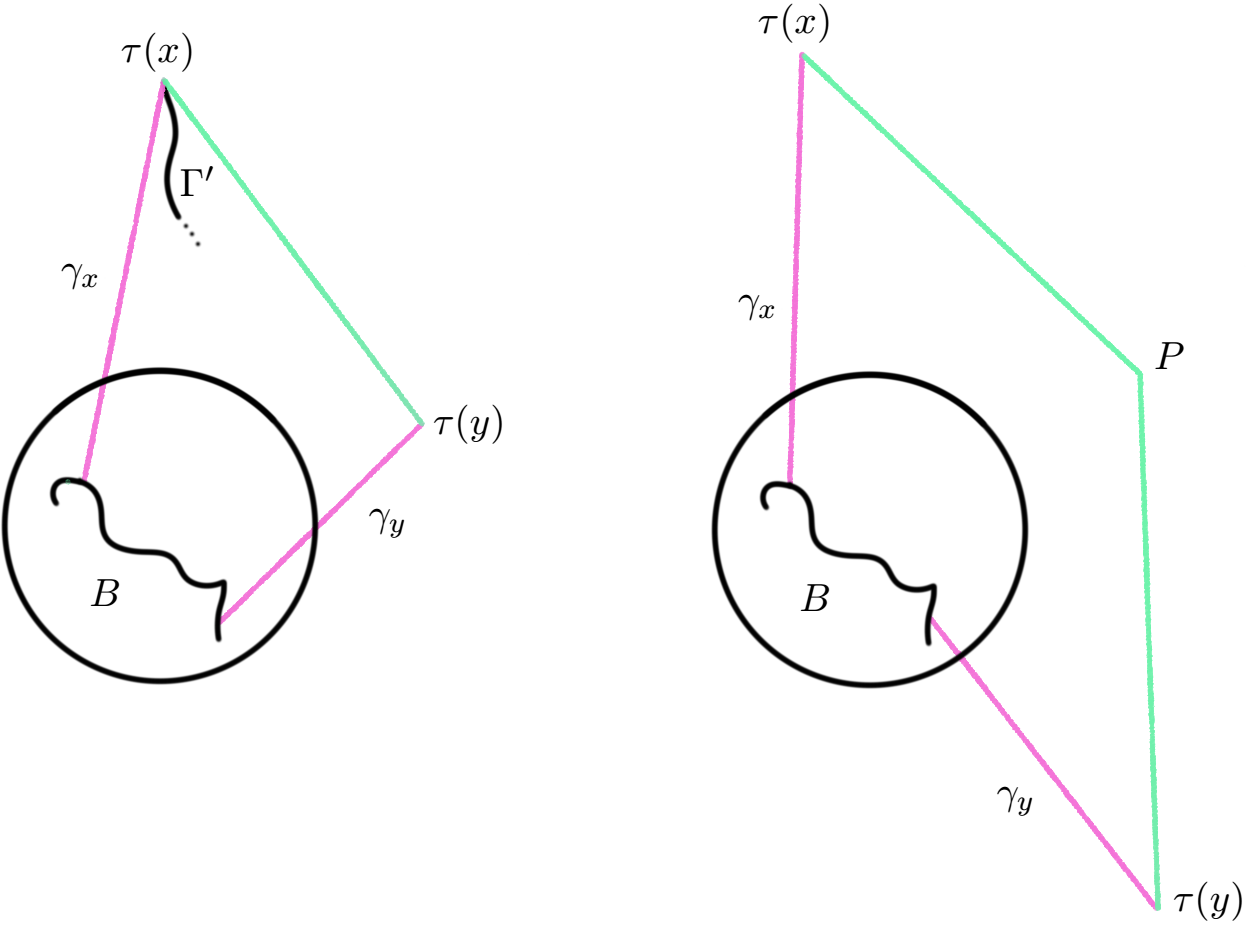}
\caption{Two situations which both lead to a contradiction.}
\label{fig:contradiction}
\end{figure}
Thus any line that passes through $\tau(x)$ (or $\tau(y)$) and contains the sets $\gamma_x, \gamma_y,$ and $B$ on one side of it automatically also contains $\Gamma'$ on that side, we call this property $(\ast)$. Now the claim \eqref{eq:tau_ineq} will be fairly easy to prove. If the line segment $L_{xy}$ from $\tau(x)$ to $\tau(y)$ does not intersect $B$, then due to property $(\ast)$ the curve $\Gamma'$ must lie within a region bounded by $L_{xy}, \gamma_x, \gamma_y$, and the part of $\partial B$ between $\gamma_x$ and $\gamma_y$. Hence $L_{xy}$ must form an angle greater or equal to $\pi/2$ with respect to both $\gamma_x$ and $\gamma_y$ due to the fact that $\tau(x)$ and $\tau(y)$ are the closest points on $\Gamma'$ from points on $\gamma_x$ and $\gamma_y$, as otherwise we could find closer points on $\Gamma'$ 
(for example in the situation of Fig. \ref{fig:contradiction} on the left close to $\tau(x)$ we could slightly cut the corner and we would go from $\gamma_x$ to $\Gamma'$ in a shorter way). This automatically shows \eqref{eq:tau_ineq} as then $|\tau(x) - \tau(y)|$ must be less than the diameter of $B$. If the line segment from $\tau(x)$ to $\tau(y)$ does intersect $B$, then we can find another point $P$ so that the segments $\tau(x) P$ and $P \tau(y)$ form acute angles with $\gamma_x$ and $\gamma_y$ respectively and by property $(\ast)$ they must contain $\Gamma'$ on the side with acute angles, see Fig. \ref{fig:contradiction}. This is a contradiction as then there must be points on $\Gamma'$ besides $\tau(x)$ which are closer to points on $\gamma_x$ than $\tau(x)$ is (in the situation of Fig. \ref{fig:contradiction} on the right close to $\tau(x)$ we could again slightly cut the corner). This proves the claim and hence the H\"older-continuity of $\tau$.
\vskip 5pt
\textbf{Step 3. The initial case.} We detail here the first step of the construction of the extension $h$ by giving an initial set $T \subset \dd$ in which we define the map $h$ on. The rest of the construction will be a repetition of this idea with respect to each dyadic interval.
\begin{figure}[htbp]
\centering
\includegraphics[width=0.9\textwidth]
{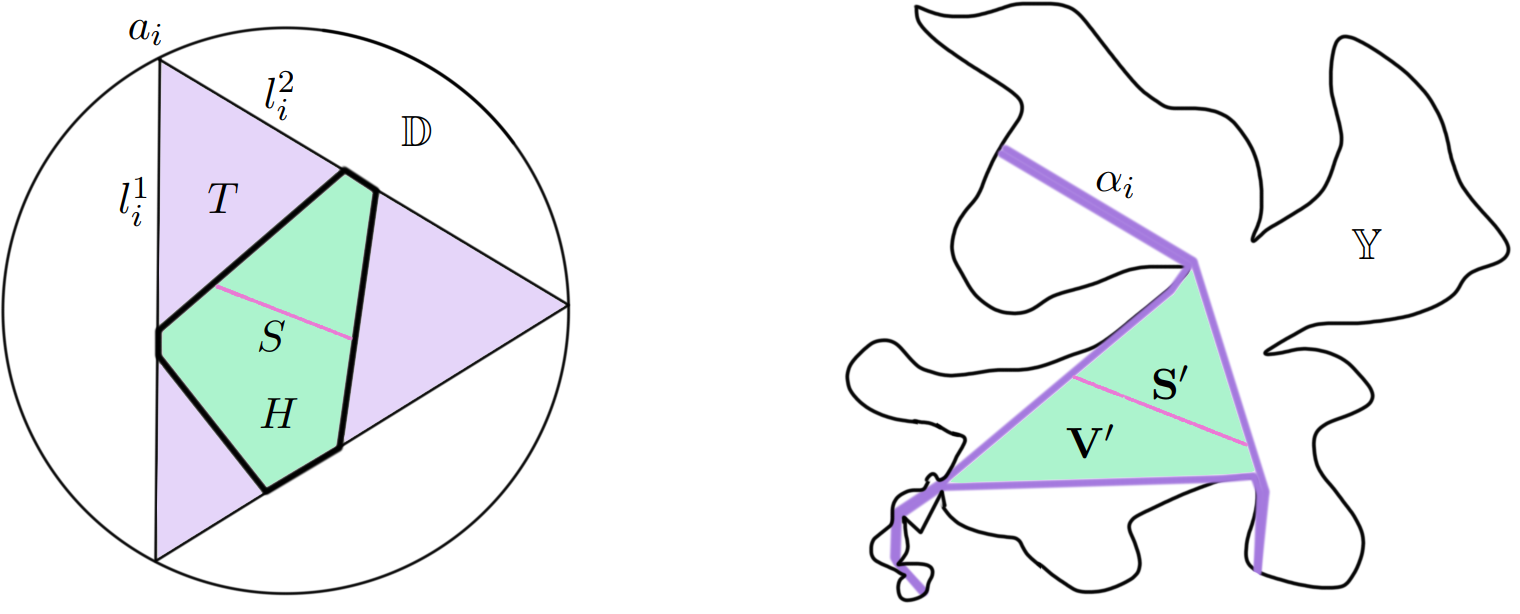}
\caption{The domain and target for the initial case.}
\label{fig:initial}
\end{figure}
Let us start the dyadic decomposition of $\partial \dd$ by fixing three initial arcs $I_{0,1}, I_{0,2}$ and $I_{0,3}$ of length $2\pi/3$ whose union is the whole circle. 
Hence we have an equilateral triangle $T$ in $\dd$ with vertices $a_i, i = 1,2,3$ given by the endpoints of these three arcs.

On the target side there are corresponding curves $\Gamma_{0,i}$, $i = 1,2,3$ given by the shortest curves in $\overline{\yy}$ between the image points $\varphi(a_i), i = 1,2,3$. These curves
may partially overlap, as the two curves $\Gamma_{0,i}$ which share a mutual endpoint $\varphi(a_j)$ may have a common part given by a curve $\alpha_j$ (see e.g. $\alpha_i$ on Fig \ref{fig:initial}). 
In the case where there is no overlap we simply consider $\alpha_j$ as the singleton $\varphi(a_j)$. However, each curve $\Gamma_{0,i}$ has some part which does not overlap with any of the others, and these three curves together define a Jordan curve that bounds a domain $\mathbf{V}'$. The particular thing to note about $\mathbf{V}'$ is that due to the fact that its boundary consists of three nonintersecting shortest curves, these three curves must be concave towards the interior of $\mathbf{V}'$ (again by \cite[Step 5 of the proof of Theorem 2.1]{HP}), see Fig. \ref{fig:initial} and also Fig. \ref{fig:shortest} for possible shapes of $\mathbf{V}'$ . We let $V$ denote the union of $\mathbf{V}'$ and the common parts $\alpha_j, j = 1,2,3$. Using the projected parametrization $\tau$ defined in previous step, we may define a boundary map $\tau' : \partial T \to \partial V$ which is monotone on each of the sides of $T$ and is H\"older-continuous with estimates in terms of the original boundary map $\varphi$.

We now aim to extend the map $\tau'$ to the interior of the triangle $T$ to give a map $h_0: T \to V$. The preimages of the curves $\alpha_i \subset V, i = 1,2,3$ are fairly easy to choose. For each $i$, we look at the two preimages $l^1_i$ and $l^2_i$ of $\alpha_i$ under $\tau'$, which are line segments starting from $a_i$ on the two sides of $\partial T$, see  Fig \ref{fig:initial}. Now for each point $P \in \alpha_i$, we look at the preimages $p_1 \in l^1_i$ and $p_2 \in l^2_i$ under $\tau'$ and connect $p_1$ and $p_2$ via a line segment $L$. Then for each $z \in L$ we simply define that $h_0(z) = P$. This effectively defines the map $h_0$ on the triangle bounded by $l^1_i$ and $l^2_i$. The map is H\"older-continuous as it is simply a linear interpolation of the two H\"older-continuous maps (given by $\tau'$) on $l^1_i$ and $l^2_i$.

This leaves a convex hexagonal region $H \subset T$ on the interior of which the map $h_0$ has not been defined yet. The boundary values of $h_0$ on $\partial H$ are already determined, and three of the sides $S_1,S_3,$ and $S_5$ of $H$ are mapped to single points under $h_0$ while the three other sides are mapped to concave curves (the boundary of $\mathbf{V}'$). To define $h_0$, we will employ a direct version of the shortest curve extension method. For clarity, let us denote by $\varphi_0 = h_0 \vert_{\partial H}$ the already defined boundary values of $h_0$.

Let us start by fixing a ''horizontal'' direction given by a line $\ell$, and we consider the ''vertical'' direction to be the one orthogonal to $\ell$. There are a few additional requirements we are able to impose. First, we may assume that any two points in $H$ can be connected by a curve consisting of at most five line segments that are either horizontal or vertical. This is due to the fact that $H$ is convex and the practical idea here is that if $H$ is very thin then we should choose the vertical direction to be parallel to the ''direction of thinness'' of $H$. 

Moreover, we may suppose that the angle between the horizontal direction $\ell$ and all of the sides of $H$ is controlled from below by some absolute constant $c > 0$. This is not immediately possible if $H$ is very thin and it has two sides which are close to being parallel to the horizontal direction we need to impose for the previous requirement, see Fig.  \ref{fig:adjust}. But in this case we make a small adjustment to these sides to replace them with two line segments that have a large angle towards the horizontal direction. Such an adjustment can be realized as a bilipschitz transformation of $H$ with absolute control on the constant so it does not affect future estimates, although it does transform the hexagon $H$ into an octagon. But we will later only use the fact that $H$ has a fixed finite amount of sides so this does not matter.

\begin{figure}[htbp]
\centering
\includegraphics[width=0.60\textwidth]
{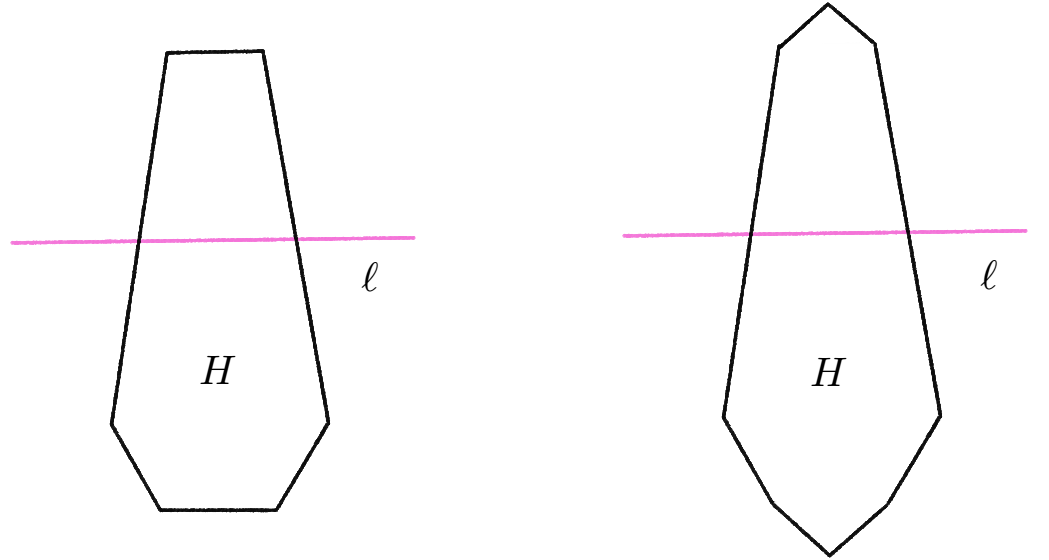}
\caption{Adjusting the hexagon.}
\label{fig:adjust}
\end{figure}

With these assumptions $H$ may be written as the union of line segments parallel to $\ell$, the first and last of which consist of simply a single vertex of $H$. We call these segments horizontal segments on $H$. For each such segment $S$ having endpoints $s_1$ and $s_2$, we define the map $h_0$ on $S$ by defining $h_0$ to map $S$ onto the shortest curve $\mathbf{S}'$ in $\mathbf{V}'$ between the points $\varphi_0(s_1)$ and $\varphi_0(s_2)$ on $\partial \mathbf{V}'$, parametrized at constant speed, see Fig. \ref{fig:shortest}. This defines a monotone map $h_0 : H \to \mathbf{V}'$, but it can be injectified with an arbitrarily small modification. Hence it remains to verify the H\"older-continuity of such a map $h_0$. It is enough to verify the required estimates separately for the horizontal and the vertical directions, since we assumed that any two points can be connected by at most five horizontal/vertical segments (the lengths of such segments may also be chosen to be less than the distance of those two points).

First of all, if two points $x_1,x_2$ belong to a common horizontal segment $S$, then we may compute due to the choice of constant speed parametrization that
\[|h_0(x_1) - h_0(x_2)| \leq |x_1 - x_2|\frac{|\mathbf{S}'|}{|S|}.\] 
Note now that due to the fact that $\mathbf{V}'$ had the special structure of being bounded by three concave curves, any shortest curve $\mathbf{S}'$ within $\mathbf{V}'$ must satisfy $|\mathbf{S}'| \leq C \diam(\mathbf{S}')$ for some absolute constant $C$. Moreover, due to the H\"older-estimates for the boundary values $\varphi_0$ we obtain that $\diam(\mathbf{S}') \leq C |s_1 - s_2|^\alpha$. Combining these gives
\[|h_0(x_1) - h_0(x_2)| \leq C |x_1 - x_2|\frac{|s_1-s_2|^\alpha}{|s_1 - s_2|} \leq C |x_1 - x_2|^\alpha.
\]
Let now $x$ and $\tilde{x}$ belong to different horizontal segments $S$ and $\tilde{S}$ but be on the same vertical line. Due to the fact that $H$ only has six sides, we may assume that the left and right endpoints of $S$ and $\tilde{S}$ belong to the same sides of $H$ respectively. We may also assume that
\begin{equation}\label{eq:Sassume}
|\tilde{S}| \leq |S|, \quad |x - \tilde{x}| \leq |S|, 
\end{equation}
the second part being because if this was not the case, we could connect $x$ to $\tilde{x}$ with two horizontal segments along $S$ and $\tilde{S}$ and one boundary curve between $s_1$ and $\tilde{S}$, each of whose length is less than a constant times $|x-\tilde{x}|$. In this case we could conclude the H\"older-continuity estimate simply from the previous estimate for the horizontal direction and the H\"older-continuity of the boundary map.

Now due to the concavity of the boundary curves in $\mathbf{V}'$, the shortest curves $\mathbf{S}' = h_0(S)$ and $\tilde{\mathbf{S}}' = h_0(\tilde{S})$ on the target side in $\mathbf{V}'$ must behave in a specific way. We aim to use this to put the possible shortest curves $\mathbf{S}'$ into a finite amount of different categories in order to be able to assume that $\mathbf{S}'$ and $\tilde{\mathbf{S}}'$ belong to the same category. We do this as follows.

Due to the fact that $\mathbf{V}'$ is bounded by three concave curves, a shortest curve $\mathbf{S}'$ in $\mathbf{V}'$ may only intersect at most one of these concave curves at points different from the endpoints of $\mathbf{S}'$. Thus such a curve $\mathbf{S}'$ consists of up to two line segments starting from either endpoint of $\mathbf{S}'$ and possibly a middle section which goes along one of the boundary curves of $\mathbf{V}'$. We can categorize each such curve by whether it is just a simple segment, a segment combined with a concave part, and so on. We also put curves in a different category depending on which of the three boundary curves of $\mathbf{V}'$ they intersect with. Finally, we may also do the following: If one of the boundary curves of $\mathbf{V}'$ is ''too curved'', meaning the angle of its tangent changes by over $\pi/2$ (in fact, at most one of the three curves may be like this as the total angular change of all three is bounded by $\pi$), then we split it into two halves on which the angle stays below $\pi/2$. We then also categorize shortest curves $S'$ into different categories if they start or end on different halves of such a ''too curved'' boundary part.

As we move the horizontal segment $S$ vertically in the domain, the category of $S'$ can only change a finite amount of times. Note that at the point where a category changes from one to another, there is always one curve $\mathbf{S}'$ which lies at the edge of these two categories and where one or more of its parts (segments or convex part) may be just singletons. We will say that such a curve $\mathbf{S}'$ belongs to both categories to avoid technicalities. In any case, the whole point of putting these curves into categories is that since there are only a finite number of categories and a finite number of ways such categories can appear in order, we may assume that $\mathbf{S}'$ and $\tilde{\mathbf{S}}'$ belong to the same category. Note that the bounds for the finite numbers that appear here are absolute.

\begin{figure}[htbp]
\centering
\includegraphics[width=0.5\textwidth]
{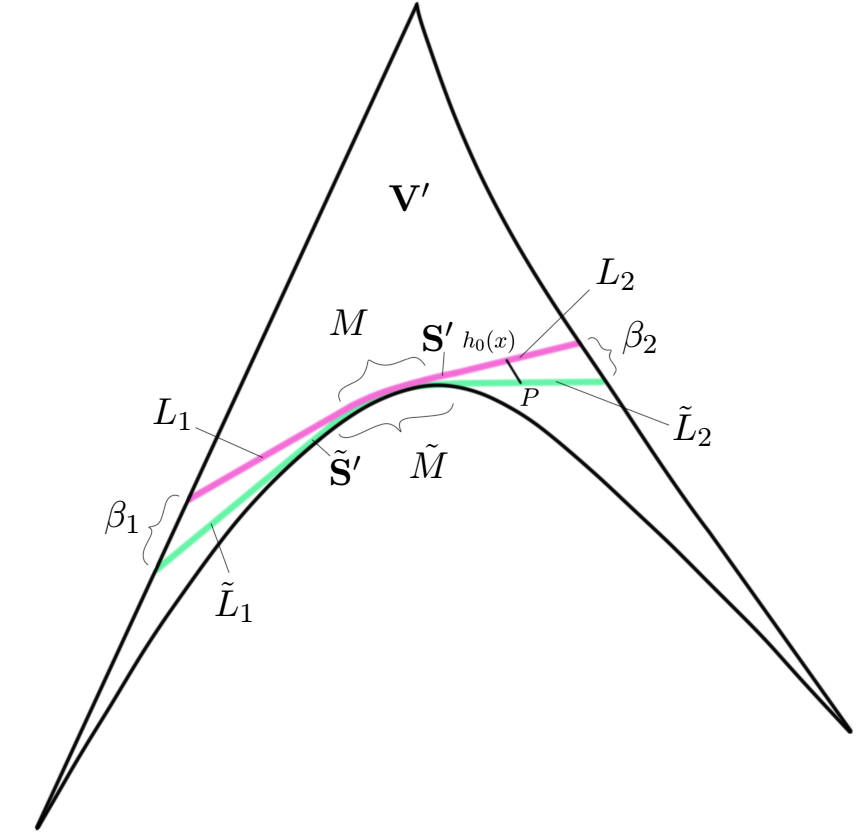}
\caption{Shortest curves $\mathbf{S}'$ and $\tilde{\mathbf{S}}'$ and related objects within the region $\mathbf{V}'$.}
\label{fig:shortest}
\end{figure}

Let $\beta_1$ denote the part of $\partial \mathbf{V}'$ between $\varphi_0(s_1)$ and $\varphi_0(\tilde{s}_1)$ that does not contain $\varphi_0(s_2)$, and define $\beta_2$ analogously, see Fig. \ref{fig:shortest}. Note that due to the fact that $\mathbf{S}'$ and $\tilde{\mathbf{S}}'$ belong to the same category and the fact that we made sure not to include the case where, say, $s_1$ and $\tilde{s}_1$ belong to a ''too curved'' boundary part, we must actually have the length estimate 
\begin{equation}\label{eq:betaineq}
|\beta_1| \leq 2|\varphi_0(s_1) - \varphi_0(\tilde{s}_1)|
\end{equation} and the same for $\beta_2$ as these are convex curves where the angular change is less than $\pi/2$. There are now a few cases to consider depending on which category $\mathbf{S}'$ and $\tilde{\mathbf{S}}'$ belong to, but they are handled very similarly so we only consider the most complicated one.

Let us thus consider the case where $\mathbf{S}'$ and $\tilde{\mathbf{S}}'$ both consist of a line segment, a middle part, and another line segment. We denote this by $\mathbf{S}' = L_1 \cup M \cup L_2$ and $\tilde{\mathbf{S}}' = \tilde{L}_1 \cup \tilde{M} \cup \tilde{L}_2$. By geometry one of the inclusions $M \subset \tilde{M}$ or $\tilde{M} \subset M$ must hold. Since we assumed that $|\tilde{S}| \leq |S|$ in \eqref{eq:Sassume} we should treat these cases separately, but they are similar enough that we will only consider the case $M \subset \tilde{M}$. The situation is now as in Fig. \ref{fig:shortest}.

Given the point $h_0(x) \in \mathbf{S}'$, we let $P$ denote the point on $\tilde{\mathbf{S}}'$ which is obtained from $h_0(x)$ as follows. If $h_0(x) \in L_1$, we traverse from $h_0(x)$ in the direction of the vector $\varphi(\tilde{s}_1) - \varphi(s_1)$ until we hit a point on $\mathbf{S}'$ which is defined as $P$. If $h_0(x) \in M$, then $P = h_0(x)$, and if $h_0(x) \in L_2$, we traverse in the direction of the vector $\varphi(\tilde{s}_2) - \varphi(s_2)$ to hit a point $P \in \mathbf{S}'$. We may suppose that we are in the case $h_0(x) \in L_1$, as the case $h_0(x) \in L_2$ is purely analogous and the case $h_0(x) \in M$ is considered very similarly. Due to the concavity of the curve $\tilde{M}$ it is geometrically clear that we have the internal distance estimate $d_{\mathbf{V}'}(h_0(x),P) \leq |\beta_1|$ and therefore also $|h_0(x) - P| \leq |\beta_1|$. It remains to estimate $|P - h_0(\tilde{x})|$.

Denote by $\partcurve_{\gamma}(a,b)$ the part of a curve $\gamma$ between two points $a,b \in \gamma$. note that since $\tilde{\mathbf{S}}'$ is a shortest curve, we must have the estimate
\[|\partcurve_{\tilde{\mathbf{S}}'}(\varphi_0(\tilde{s}_1),P)| \leq |\beta_1| + |\partcurve_{\mathbf{S}'}(\varphi_0(s_1),h_0(x))| + d_{\mathbf{V}'}(h_0(x),P).\]
Similarly for $\mathbf{S}'$, we get
\[|\partcurve_{\mathbf{S}'}(\varphi_0(s_1),h_0(x))| \leq |\beta_1| + |\partcurve_{\tilde{\mathbf{S}}'}(\varphi_0(\tilde{s}_1),P)| + d_{\mathbf{V}'}(h_0(x),P).\]
Since $d_{\mathbf{V}'}(h_0(x),P) \leq |\beta_1|$, we find the comparison estimate
\begin{equation}\label{comp1}
|\, |\partcurve_{\mathbf{S}'}(\varphi_0(s_1),h_0(x))| - |\partcurve_{\tilde{\mathbf{S}}'}(\varphi_0(\tilde{s}_1),P)| \, | \leq 2 |\beta_1|.
\end{equation}
A similar argument shows that
\begin{equation}\label{comp2}
|\,|\mathbf{S}'| - |\tilde{\mathbf{S}}'|\,| \leq |\beta_1| + |\beta_2|.
\end{equation}
Note next that since $x$ and $\tilde{x}$ are on the same vertical line, since the horizontal segments $S$ and $\tilde{S}$ have endpoints on the same sides of $H$, and since the angle between the sides of $H$ and the horizontal direction is bounded absolutely from below, we have that
\begin{equation}\label{comp3}|\, |x - s_1| - |\tilde{x} - \tilde{s}_1| \, | \leq C_1 |s_1 - \tilde{s}_1| \leq C_2|x - \tilde{x}|.
\end{equation}
By triangle inequality we also have the direct estimate
\begin{equation}\label{comp4}
|\,|S| - |\tilde{S}|\,| \leq |s_1 - \tilde{s_1}| + |s_2 - \tilde{s}_2| \leq C |x - \tilde{x}|.
\end{equation}
Due to the constant speed parametrization of the curves $\mathbf{S}'$ and $\tilde{\mathbf{S}'}$, we may now combine \eqref{comp1}-\eqref{comp4} to calculate that
\begin{align*}
|P - h_0(\tilde{x})| &\leq |\partcurve_{\tilde{\mathbf{S}}'}(P,h_0(\tilde{x}))|
\\ &= \bigl|\,|\partcurve_{\tilde{\mathbf{S}}'}(\varphi_0(\tilde{s}_1),h_0(\tilde{x}))| - |\partcurve_{\tilde{\mathbf{S}}'}(\varphi_0(\tilde{s}_1),P)|\,\bigr|
\\&\leq  \bigl|\,|\partcurve_{\tilde{\mathbf{S}}'}(\varphi_0(\tilde{s}_1),h_0(\tilde{x}))| - |\partcurve_{\mathbf{S}'}(\varphi_0(s_1),h_0(x))|\,\bigr| + 2|\beta_1|
\\&= \left|\frac{|\tilde{s}_1 - \tilde{x}|}{|\tilde{S}|} |\tilde{\mathbf{S}}'|  - \frac{|s_1 - x|}{|S|} |\mathbf{S}'| \right| + 2|\beta_1|
\\&\leq \left|\frac{|\tilde{s}_1 - \tilde{x}|}{|\tilde{S}|}  - \frac{|s_1 - x|}{|S|}\right| |\mathbf{S}'|  + 3|\beta_1| + |\beta_2|
\\&\leq \left|\frac{|\tilde{s}_1 - \tilde{x}| - |s_1 - x|}{|S|}\right| |\mathbf{S}'| + C\frac{|\mathbf{S}'|}{|S|} |x - \tilde{x}|  + 3|\beta_1| + |\beta_2|
\\&\leq C_2 \frac{|\mathbf{S}'|}{|S|} |x - \tilde{x}|  + 3|\beta_1| + |\beta_2|.
\end{align*}
Note now that since the curve $\mathbf{S}'$ is a shortest curve in the domain $\mathbf{V}'$, by the fact that $\mathbf{S}'$ may not turn more than $\pi$ degrees we find the estimate $|\mathbf{S}'| \leq C\diam(\mathbf{S}')$. The H\"older-continuity of the boundary values $\varphi_0$ gives that $\diam(\mathbf{S}') \leq C|S|^\alpha$. Combining these and remembering that we assumed $|x - \tilde{x}| \leq |S|$ in \eqref{eq:Sassume}, for the first term above we have the estimate
\[\frac{|\mathbf{S}'|}{|S|} |x - \tilde{x}| \leq C |S|^{1-\alpha} |x - \tilde{x}| \leq C |x - \tilde{x}|^\alpha.\]
Combining \eqref{eq:betaineq}, \eqref{comp3}, and the H\"older-continuity of $\varphi_0$ gives that
\[|\beta_i| \leq C|x-\tilde{x}|^\alpha,\, i = 1,2.\]
This finally yields that
\[|h_0(x) - h_0(\tilde{x})| \leq |h_0(x) - P| + |P - h_0(\tilde{x})| \leq C|x-\tilde{x}|^\alpha.\]
Hence the extension $h_0$ as defined is H\"older-continuous with the same exponent and comparable constants to the original boundary map $\varphi$.
\vskip 5pt
\textbf{ Step 4. The general construction.} Let us keep the indices $k,n$ fixed for ease of presentation and consider the set $V = V_{n,k}$ defined as the union of $\Gamma = \Gamma_{n,k}$, its two children $\Gamma_+$ and $\Gamma_-$, and the region bounded by them. Note that the curves $\Gamma_+$ and $\Gamma_-$ may overlap with $\Gamma$ and also with each other. In fact, due to the definition of, say, $\Gamma$ and $\Gamma_+$ being shortest curves starting at the point $A$, their intersection is either the single point $\varphi(a)$ or one connected curve $\alpha_+$ starting from $\varphi(a)$. Similarly $\alpha_-$ is defined as the common part of $\Gamma$ and $\Gamma_-$, and $\alpha_{\pm}$ as the common part of $\Gamma_+$ and $\Gamma_-$. The Jordan domain $\hat{V}$ is defined by removing from $V$ the common parts $\alpha_+,\alpha_-,$ and $\alpha_{\pm}$, see Fig. \ref{fig:target}. The boundary $\partial\hat{V}$ consists of three curves which are concave towards its interior due to the assumption that they are non-intersecting shortest curves.
\begin{figure}[htbp]
\centering
\includegraphics[width=0.6\textwidth]
{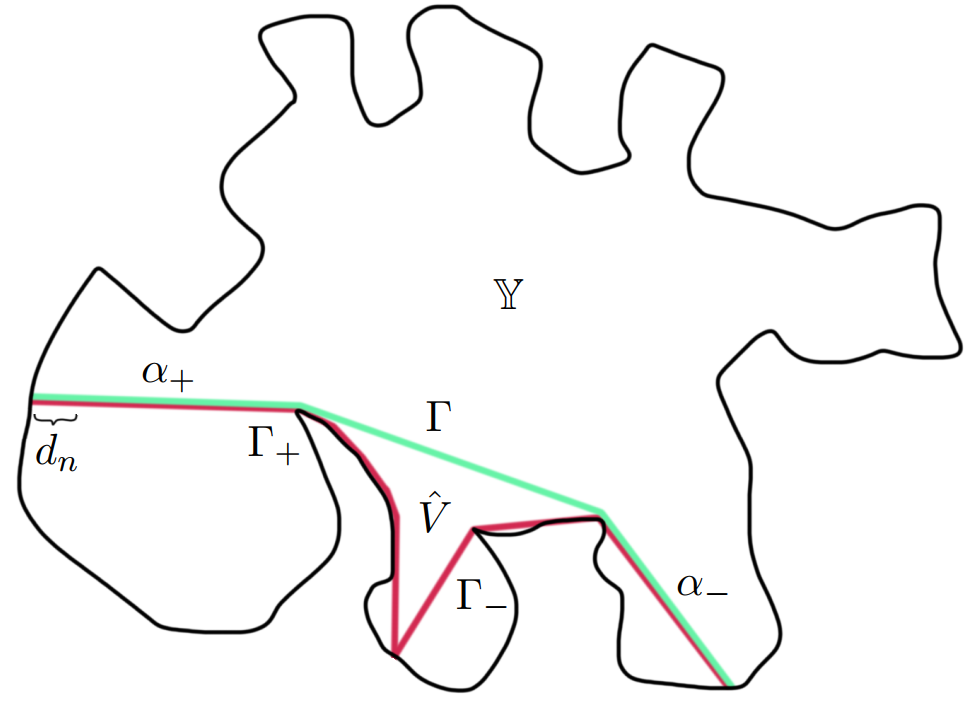}
\caption{The configuration on the target side.}
\label{fig:target}
\end{figure}
Let us now recall the set $U = U_{n,k}$ given on the domain side as the region between the curve $\gamma = \gamma_{n,k}$ (consisting of two sides of the isosceles triangle on top of $I = I_{n,k}$) and its two dyadic children $\gamma_+$ and $\gamma_-$, see Fig. \ref{fig:domain1} for the shape of $U$. In the second step we have defined a parametrization  $\tau : I \to \Gamma$ which is H\"older-continuous with the correct estimates. The map $\tau$ was defined for each dyadic interval and we will sometimes distinguish this map by writing the dependence as $\tau = \tau_{\,\Gamma}$, as opposed to $\tau_{\,\Gamma_+}$ or $\tau_{\,\Gamma_-}$ which are also maps defined on subintervals of $I$ but not necessarily equal to $\tau_{\,\Gamma}$. We would like to lift the parametrization $\tau$ from the interval $I$ to define a parametrization $\tau' : \gamma \to \Gamma$ and repeat this for the children $\gamma_+$ and $\gamma_-$ to define a ''boundary map'' $\tau' : \partial U \to V$, but unfortunately this lifting process cannot be done directly as there may be issues with the fact that the curve $\gamma \subset \partial U$ has very small angles with respect to $\gamma_+$ and $\gamma_-$ at the two endpoints of $I$, causing possible issues with H\"older-estimates if the parametrization $\tau_{\Gamma}$ has some notable difference with $\tau_{\Gamma_+}$ and $\tau_{\Gamma_-}$ near these points.

Instead, we aim to split each curve $\gamma$ into three parts $g_A, g_L$ and $g_R$. The two parts $g_L$ and $g_R$ will be line segments starting from the two endpoints $a,b$ of $I$, and they will be mapped to $\Gamma$ so that their images are very close to the endpoints $\varphi(a)$ and $\varphi(b)$ of $\Gamma$. In fact, at the start of the construction (after fixing the $\Gamma_{n,k}$:s) we fix a sequence $(d_n)$ of numbers decreasing very quickly to zero, and on each dyadic level $n$ the curves $g_L,g_R$ will be mapped to parts of $\Gamma$ which have length $d_n$. The sequence $d_n$ may be chosen to tend to zero arbitrarily fast without impacting H\"older-continuity estimates, and we will use this fact later to be able to say that the extension map $h$ is practically a constant map in the part of $U$ near to the small angles at the endpoints $a,b$ of $I$ and hence we do not need to worry about H\"older-estimates there.

We opt to call the part $g_A \subset \gamma$ the \emph{active part} of $\gamma$, and its image on $\Gamma$ will be most of $\Gamma$. The parts $g_A, g_L$ and $g_R$ are defined as follows, see Fig. \ref{fig:domain1}. If the curve $\gamma$ is not on the first dyadic level, it has a parent curve $\gamma_P$ and shares one endpoint with $\gamma_P$, say the left endpoint $a$ for example. In this case $g_L$ will be the left line segment of $\gamma$, and $g_R$ will be the part of the right line segment of $\gamma$ obtained as a line segment whose vertical projection to the $x$-axis is the same as the right line segment of the rightmost child curve $\gamma_-$. If $\gamma$ is on the first dyadic level, we do the process of defining $g_L$ and $g_R$ via the dyadic children as outlined in the previous sentence for both sides of $\gamma$. The active part $g_A$ is what is left of $\gamma$.
\begin{figure}[htbp]
\centering
\includegraphics[width=1.0\textwidth]
{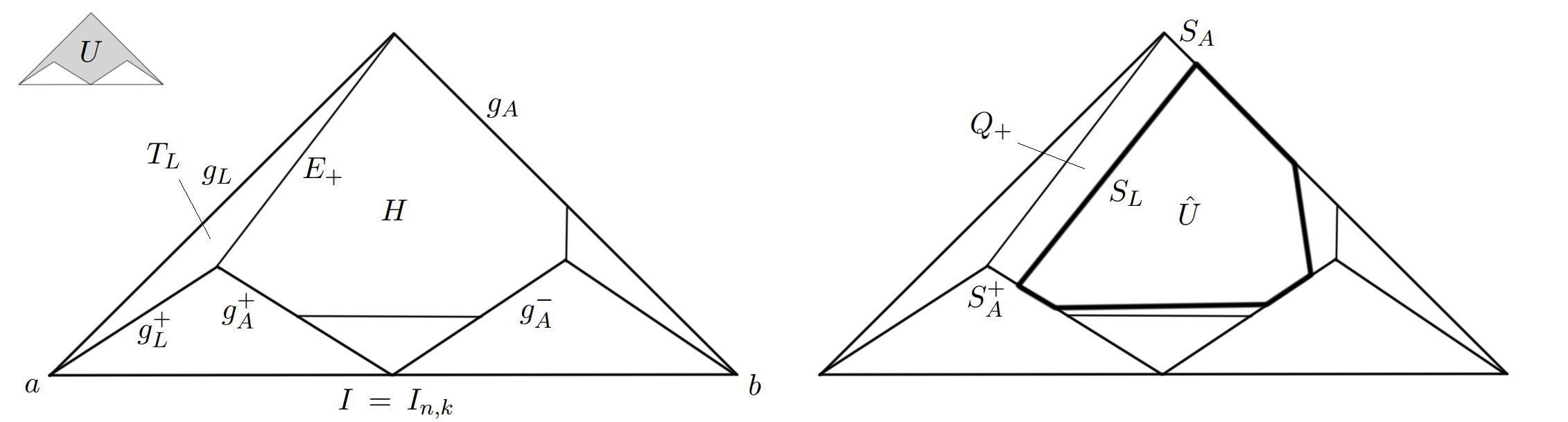}
\caption{Different configurations on the domain side.}
\label{fig:domain1}
\end{figure}
We may connect the active part $g_A$ with the active parts $g^+_A$ and $g^-_A$ of $\gamma_+$ and $\gamma_-$ by taking the convex hull of their union, which defines a hexagonal region $H \subset U$ that is uniformly bilipschitz-equivalent to a ball of radius $2^{-n}$. We now start defining the boundary map $\tau' : \partial U \to \Gamma \cup \Gamma_+ \cup \Gamma_-\subset V$, and we will also define $\tau'$ on the whole of $\partial H$.

Let us again suppose that we are in the situation where $g_L$ is the whole left segment of $\gamma$ and $g_R$ is a part of the right segment of $\gamma$. We first set aside two subcurves $\Gamma_L$ and $\Gamma_R$ of the curve $\Gamma$ which begin at each endpoint and have length $d_n$ and $d_{n+1}$. Note that since both of the endpoints of $\Gamma$ were assumed to be good points in the beginning, there are no technicalities here and, choosing $d_n$ small enough, these parts will always be either line segments or concave curves only touching one side of $\partial \yy$ near their endpoint. Let $\Gamma_m$ denote the remaining middle part of $\Gamma$. We may now make a linear change of variables to transform the parametrization $\tau : \tau^{-1}(\Gamma_m) \to \Gamma_m$ into a parametrization $\tau' : g_A \to \Gamma_m$ from the active part of $\gamma$ to $\Gamma_m$ (in the case where we are on the first dyadic level, a minor modification to this is sufficient). The H\"older-estimates are inherited since $|g_A| \approx |I|$.

We next turn to define $\tau'$ on the curves $g_L$ and $g_R$ which will be mapped to $\Gamma_L$ and $\Gamma_R$ respectively. We would like to impose the following rule which affects parametrizations across all dyadic levels: If a point $Q \in g_L$ and a point on $Q_+ \in g^+_L$ (with $g^+_L$ denoting the part $g_L$ for the child curve $\gamma_+$) have the same projection to the $x$-axis, then they should be mapped to points on $\tau'(Q) \in \Gamma$ and $\tau'(Q_+) \in \Gamma_+$ which are "equally far away from $\varphi(a)$" meaning that
\begin{equation}\label{eq:projX}|\partcurve_{\Gamma}(\varphi(a),\tau'(Q))| = |\partcurve_{\Gamma_+}(\varphi(a),\tau'(Q_+))|.\end{equation}
We also impose an analogous condition for pairs $Q \in g_R$, $Q_- \in g^-_R$ having the same projection to the $x$-axis.

Note that $\Gamma$ and $\Gamma_+$ may be the same curve near their mutual endpoint $\varphi(a)$, in which case this just says that the images of the points $Q,Q_+$ under $\tau'$ are equal. In fact, we make the following standing assumption here. If $\Gamma$ and $\Gamma_+$ do overlap, meaning $\alpha_+$ is not equal to a singleton, then we choose $d_n$ so small that $d_n \leq |\alpha_+|$ (here $\alpha_+$ depends on $n$ but also $k$, so we take a minimum over $k$). We also assume $d_n \leq |\alpha_-|$ and $d_n \leq |\alpha_\pm|$ as long as the curves are not singletons. This forces us to always map $g_R$ and $g_L$ under $\tau'$ to subsets of $\alpha_+$ or $\alpha_-$ if these aren't singletons.

The consequence of such a rule is that to define $\tau'$ on $g_L$ and $g_R$ it is enough to define $\tau'$ on the part of $g_L$ that does not project to any point that is also a projection from the child curve $g^+_L$. The definition of the map $\tau'$ on any other point of $g_L$ will be obtained inductively from successive dyadic generations, and since we were in the case where $g_R$ projects to the same set as $g^-_R$ we need not define $\tau'$ on any point on $g_R$ yet. On the part of $g_L$ where $\tau'$ needs to be defined, we may simply set it to map this part (which is a segment) with constant speed to the part of $\Gamma$ which contains the points of length $d_{n+1}$ to $d_n$ from the endpoint $\varphi(a)$. In practice this means that on $g_L$ the map $\tau'$ will consist of maps defined on a sequence of segments (see Fig. \ref{fig:parts}) tending to the endpoint $a$ in dyadic fashion having length comparable to $2^{-N}$, $N = n, n+1, \ldots$, and each such segment is mapped with constant speed to a part of $\Gamma$ with length $d_N - d_{N+1}$.

\begin{figure}[htbp]
\centering
\includegraphics[width=0.6\textwidth]
{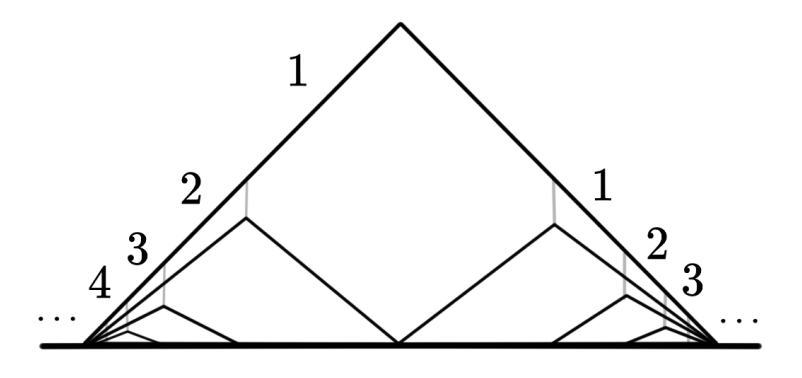}
\caption{Dividing the segments $g_L$ and $g_R$ into dyadic parts on which to define $\tau'$.}
\label{fig:parts}
\end{figure}

Hence $\tau'$ has been defined on all parts of $\gamma$ (and the curves $\gamma$ for successive dyadic generations). We next define $\tau'$ on $\partial H$, where we only need to define $\tau'$ on the three sides of the hexagon $\partial H$ not already contained in $\gamma, \gamma_+,$ or $\gamma_-$. Let us call these segments $E_+$, $E_-$, and $E_\pm$, and the definition of $\tau'$ on each such segment is similar enough that we may only do so on $E_+$ which is the segment between the non-mutual endpoints $a_L \in g_L$ and $a^+_L \in g^+_L$, see Fig. \ref{fig:domain1}. To define $\tau'$ on $E_+$ we will define an image curve $F_+$ within $V$ so that $\tau' : E_+ \to F_+$ is simply a constant speed parametrization. The goal is simply that the length of $F_+$ is at most $2 d_n$. But since the internal distance (meaning path distance within the closed set $V$ here) of the points $\tau'(a_L)$ and $\tau'(a^+_L)$ within $V$ has to be at most $d_n + d_{n+1}$ since
\[|\partcurve_{\Gamma}(\varphi(a),\tau'(a_L))| = d_n \quad \text{ and } \quad |\partcurve_{\Gamma_+}(\varphi(a),\tau'(a^+_L))| = d_{n+1},\]
this choice of $F_+$ is always possible. More precisely we define $F_+$ to be the shortest curve in $\overline{V}$ between its endpoints $\tau'(a_L)$ and $\tau'(a_{L}^+)$.

Note that if $\Gamma$ and $\Gamma_+$ overlap (so that $\alpha_+$ is not a singleton) then $F_+$ may simply be chosen as the curve $\partcurve_{\alpha_+}(\tau'(a_L),\tau'(a^+_L))$. Here $\tau'(a_L),\tau'(a^+_L) \in \alpha_+$ by our earlier assumption that if $\alpha_+$ is not a singleton then $d_n < |\alpha_+|$.

Thus now $\tau'$ has been defined as a constant speed map from $E_+$ to $F_+$ and since $d_n$ was chosen small enough we can assume to have the same H\"older-estimates for $\tau'$ on $E_+$ as the original boundary map.

It is now simple to also verify the H\"older-continuity of $\tau'$ on the whole of $\partial U \cup \partial H$. On each of the six sides of $\partial H$ the map $\tau'$ was already verified to have the correct H\"older-estimates, and since the hexagon $\partial H$ has no small angles this holds on the whole boundary as well. All the other parts of $\partial U$ are mapped to curves where the small enough choice of the sequence $(d_n)$ will guarantee H\"older-continuity.

Next, we finally define the extension $h$ by extending the map $\tau'$ to the whole of $U$. Let us start with the region $H \subset U$. We must first account for the possible overlapped parts $\alpha_+,\alpha_-,$ and $\alpha_\pm$ in the target. Let us take the whole preimage $B$ of these parts under $\tau'$. Let us for now make a standing assumption that none of these parts are singletons, and we will cover the other case later. We explain here how this affects things around the preimage of $\alpha_+$ and the cases of $\alpha_-$ and $\alpha_\pm$ are handled very similarly. Since $g_L$ and $g^+_L$ are part of this preimage $B$, there are also some points on the boundary of $H$ which are mapped to $\alpha_+$: the curve $E_+$ in particular belongs to $B$ but also some part of $g_A$ and $g^+_A$ belongs to $B$. We may, however, assume that $g_A$ is not fully contained in the preimage $B$ - this may be achieved simply by choosing $d_n$ small enough so that $|\Gamma| - |\alpha_+| > d_n$. Similarly we may assume that $g^+_A$ and $g^-_A$ are not contained in $B$. Hence there are some subsets (line segments) of $g_A, g^+_A,$ and $g^-_A$ which are not in $B$, and these three line segments themselves bound yet another hexagon which we denote by $\hat{U}$, see Fig. \ref{fig:domain1}. Now $\hat{U}$ is still convex but may be fairly thin for example.

On three of the sides of $\hat{U}$ (the sides on the active parts) the map $\tau'$ was already defined. For each of the other three sides, the map $\tau'$ has the same value on both endpoints (this value is the endpoint of $\alpha_+, \alpha_-,$ or $\alpha_\pm$ not on $\partial \yy$) so we extend $\tau'$ as a constant map on these three sides. Now $\tau'$ has been defined on the whole boundary of the convex hexagon $\hat{U}$, and the image of $\partial \hat{U}$ under $\tau'$ is the boundary of the region $\hat{V}$ bounded by three concave curves. Hence at this part of the construction we may repeat the idea of the initial case from Step 3 where exactly the same situation was addressed to extend a boundary map from a convex hexagonal region to a region bounded by three concave curves. The H\"older-estimates obtained there show that $h$ is H\"older-continuous in $\hat{U}$ with the correct estimates.

Now the set $H \setminus \hat{U}$ divides into up to three convex quadrilateral regions. For simplicity we consider here the case of such a region $Q_+$ which has one side given by $E_+$ and shares points with the preimage of $\alpha_+$. In fact, the whole boundary of $Q_+$ is mapped to $\alpha_+$ under $\tau'$. Let us label the sides of $Q_+$ as $S_L, S_A, S^+_A,$ and $E_+$. Here $S_A \subset g_A$ and $S^+_A \subset g^+_A$. We wish to first compare how the map $\tau'$ behaves on the sets $S_A$ and $S^+_A$, both mapped to $\alpha_+$ injectively. On the respective endpoints of $S_A$ and $S^+_A$ that also belong to $S_L$, $\tau'$ attains the same value. The other endpoints of $S_A$ and $S^+_A$ 
(which are $a_L$ and $a^+_L$) 
are mapped to the curve $\alpha_+$ and are of curve length $d_n$ and $d_{n+1}$ away from the endpoint $\varphi(a)$ respectively. We distinguish a subsegment $Z \subset S^+_A$ for which  $\tau'(g_L^+)\cup\tau'(Z) = \tau'(g_L)$, meaning that 
the endpoint of $Z$ not on $E_+$ is also mapped to curve length $d_n$ away from $\varphi(a)$. We then need a helpful remark.

\textbf{Remark:} Given a point $p$ on $\varphi(I)$ and letting $q$ denote the point on $\Gamma$ which is the closest point to $p$ w.r.t. the internal distance in $\overline{\yy}$, we have the following: If $q$ lies on $\alpha_+$ but is not an endpoint of $\alpha_+$, then $q$ is also the closest point to $p$ on $\Gamma_+$ w.r.t. the internal distance, see Fig. \ref{fig:shorter1} on the left. To see this, consider the minimal boundary component of $\partial \yy$ which contains $p$ and has endpoints on $\Gamma_+$. Then the part of $\Gamma_+$ between these two endpoints has to be concave towards $p$ and the closest point $q'$ on $\Gamma_+$ to $p$ w.r.t the internal distance lies on this part. Now if $q' \neq q$, the three curves given by $\Gamma_+$ and the two shortest curves $\Gamma_q$ from $p$ to $q$ and $\Gamma_{q'}$ from $p$ to $q'$ bound a region $\Delta$ whose three boundary curves are all concave towards the interior. Moreover, since $q'$ was the closest point to $p$ on $\Gamma_+$, it must hold that the angle between $\Gamma_{q'}$ and $\Gamma_+$ towards the interior of $\Delta$ is at least $\pi/2$. Since $q$ was not an endpoint of $\alpha_+$ and $\alpha_+ \subset \Gamma_+$, it must also hold that the angle between $\Gamma_q$ and $\Gamma_+$ is at least $\pi/2$. Now the sum of two interior angles in the region $\Delta$ is at least $\pi$, which gives a contradiction as this cannot happen when each of the boundary curves of $\Delta$ is concave towards the interior.

Note, however, that if $q$ is the other endpoint of $\alpha_+$, it is possible that $q' \neq q$ if the situation is as in Fig. \ref{fig:shorter1} on the right. This is the only exceptional case, and to avoid additional consideration of this case in the future, we will simply choose to make an arbitrarily small modification to the parametrization map $\tau$ ($= \tau_{\,\Gamma}$) in this situation which does not affect the H\"older-estimates for $\tau$ (and by proxy $\tau'$). This modification is done in a way where the preimage of all such exceptional points $p$ under $\varphi$ is not mapped to $q$ by $\tau_{\,\Gamma}$ but rather to an arbitrarily small part of the curve $\Gamma$ starting at the end of $\alpha_+$.
\begin{figure}[htbp]
\centering
\includegraphics[width=1.0\textwidth]
{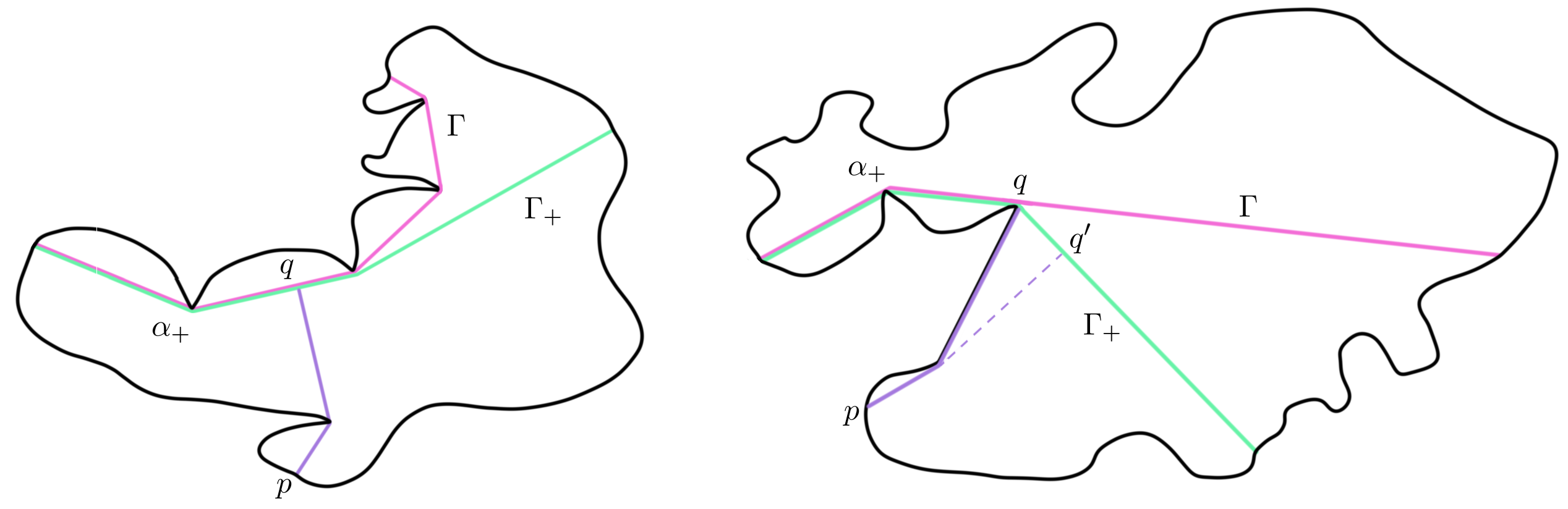}
\caption{The shortest paths from $p$ to $q \in \alpha_+$. If $q$ is an endpoint of $\alpha_+$, the situation may change.}
\label{fig:shorter1}
\end{figure}

The conclusion we make from the remark in the previous two paragraphs is that the parametrization map $\tau$ which can be defined for both the curve $\Gamma$ and the curve $\Gamma_+$ (we distinguish these maps by $\tau$ and $\tau_+$) behaves exactly the same on the preimage of $\alpha_+$, meaning that $\tau(x) = \tau_+(x)$ for $x \in \tau^{-1}(\alpha_+)$. Hence the map $\tau'$ which was obtained from the map $\tau$ by a linear change of variables also satisfies that $\tau'\vert_{S_A}$ is simply a linear change of variables of $\tau'\vert_{Z}$. 
This lets us divide $Q_+$ into a triangle $T'$ with two sides given by $Z$ and $E^+$ and a quadrilateral $Q'$ with three sides $S_A$, $S_L$ and $S^+_A \setminus Z$.

On the quadrilateral $Q'$, we simply define $h$ as a linear interpolation between the maps $\tau'\vert_{S_A}$ and $\tau'\vert_{S^+_A \setminus Z}$. More precisely, since $Q'$ may be represented as $Q' = \{t S_A + (1-t) (S^+_A \setminus Z) : 0 \leq t \leq 1\}$, we define $h$ on each intermediate segment $t S_A + (1-t) (S^+_A \setminus Z)$ as simply the map $\tau'$ composed with the natural projection from $t S_A + (1-t) (S^+_A \setminus Z)$ to $S_A$. This gives a H\"older-continuous map $h : Q' \to \alpha_+$ with estimates inherited from $\tau'$ since the map is practically just $\tau'$ in one direction and constant in the other.

On the triangle $T'$, the map $\tau'$ maps the two sides $E^+$ and $Z$ both onto the same part of $\alpha_+$ between $d_{n+1}$ and $d_n$ length from $\varphi(a)$. The parametrizations here may be slightly different as the map $\tau'$ on $E^+$ was simply defined via constant speed parametrization while $Z$ it is inherited from the map $\tau$. Nevertheless, since both parametrizations satisfy appropriate H\"older-estimates and due to $d_n$ being small enough, we may simply linearly interpolate between these two parametrizations in $T'$ to define an extension map $h: T' \to \tau'(Z) \subset \alpha_+$ with the correct H\"older-estimates.



This defines the map $h$ on $Q_+$ and hence it is now defined on the whole hexagon $H \subset U$. The same idea as in the previous paragraph may be applied to define the extension $h$ on the triangle $T_L$ with sides $g_L$ and $g^+_L$. In fact, if we remove from $g_L$ the part that projects to the $x$-axis as the same set as $g^+_L$, this splits the triangle $T_L$ into two triangles. In one triangle we may use the $x$-axis projection property \eqref{eq:projX} to define the extension $h$ simply as a constant map on vertical segments, while on the remaining triangle (whose one side is $S_L$) we may use the same idea as in the previous paragraph. This defines the extension $h$ on $T_L$ and the same idea may be used to define $h$ on the whole of $U$.

Let us now address the case where $\alpha_+$ is a singleton. Then we would modify the above construction as follows. First, we note that the curves $\Gamma$ and $\Gamma_+$ must be concave towards each other near their common point $\varphi(a)$, so we may choose the earlier curve $F_+$ as simply a line segment between the respective endpoints given by points at length $d_n$ and $d_{n+1}$ on $\Gamma$ and $\Gamma_+$ respectively (if needed, we can decrease $d_n$ beforehand so that this segment $F_+$ has no additional intersections with other curves). We recall that $\tau'$ maps the edge $E_+$ of the hexagon $H$ to this segment $F_+$, and in this case $E_+$ is also an edge of the hexagon $\hat{U}$ mapped to $\hat{V}$ previously. But in this case we must instead construct the extension $h$ to map $\hat{U}$ to the set $W$ obtained by removing from $\hat{V}$ the small part $W_+$ near $\varphi(a)$ that is cut away from it by the segment $F_+$. If some of the curves $\alpha_-$ or $\alpha_\pm$ are also singletons, this is reflected in the definition of $W$ by also cutting away appropriate parts $W_-$ and/or $W_\pm$.

Hence the set $W$ consists three concave boundary curves and up to three other boundary parts which are arbitrarily small segments between these curves. We would like to still use the idea from the 'initial case' to define an extension $h : \hat{U} \to W$, but the argument is not applicable without some modifications as it assumed that the target boundary did not have the additional small segments such as $F_+$. Nevertheless, one may verify that as long as these segments are small enough there is no essential change to the argument or estimates. Hence this is just a matter of choosing the numbers $d_n$ to be appropriately small depending on the curve $\Gamma$ and its children. In fact, one may even repeat the arguments of the initial construction in this situation and notice that the shortest curves in $W$ are still only consisting of at most two segments and a concave part in the middle, so the proof of H\"older-estimates will go through similarly as well.

On the triangle $T_L$, we define the map $h: T_L \to W_+$ as an extension of the boundary values $\tau'$. For such an extension we may actually use the idea from the 'initial case' since $W_+$ is bounded by two concave curves and a segment. But any reasonable extension method here will also work as long as the number $d_n$ is chosen small enough since any bounds obtained will get uniformly smaller as $d_n$ tends to zero. Some care should also be taken here to respect the way $\tau'$ was defined dyadically on the boundary curves $g_L$ and $g_L^+$ of $T_L$, recall Fig. \ref{fig:parts}. Each such piece of $g_L$ defines a "dyadic slice" of $T_L$ that projects to the same region on the $x$-axis as $g_L$, and we may define the extension $h$ on $T_L$ so that on the $m$:th dyadic slice we have estimates controlled by $d_{n+m-1}$ instead of $d_n$. This observation is fairly trivial but necessary to get the correct global bounds as we approach the boundary.
\vskip 5pt
\textbf{Step 5. Global H\"older continuity.} In the previous part of the construction, we were able to define the extension $h$ in each set $U = U_{n,k}$ and verified that it satisfies the appropriate H\"older-continuity estimates locally in each such set. The fact that $h$ maps $U_{n,k}$ as a monotone map to $V_{n,k}$ means that this already defines $h$ as a monotone map of the whole disk $\dd$ to the target domain $\yy$. In this part of the proof we will verify that the map $h$ is also H\"older-continuous in the whole disk $\dd$ with the appropriate estimates inherited from its local behaviour.

To do this, consider two points $x,y \in \dd$. Thus $x \in U_{n,k}$ and $y \in U_{n',k'}$ for some $n,k,n',k'$, where we may suppose that $n \leq n'$. Let $N$ be such that $2^{-N} \approx |x-y|$. We split into two cases depending on how far $x$ and $y$ are apart compared to the level of their dyadic sets, see Fig. \ref{fig:casesxy}.

\emph{Case 1.} $N \leq n$. In this case, we may traverse from $x$ to $y$ via two sequences of sets $U^n_x, U^{n-1}_x, \ldots, U^{N}_x$ and $U_y^{N}, U^{N+1}_y, \ldots, U^{n'}_y$. Each of these sets is one of the dyadic sets $U_{m,l}$ considered before, with $x \in U^n_x, y \in U^{n'}_y$ and the middle set $U^N_x = U^N_y$ is from the dyadic level $N$ and hence having diameter comparable to $2^{-N}$. Moreover, the dyadic level changes by at most one in each step which allows us to use the H\"older-continuity of $h$ in each set to estimate that
\begin{align*}
|h(x) - h(y)| &\leq \sum_{l=n}^{N}\diam(h(U^l_x)) + \sum_{l=n'}^{N} \diam(h(U^l_y))
\\&\leq C\sum_{l=n}^{N}2^{-l\alpha} + C\sum_{l=n'}^{N} 2^{-l\alpha}
\\&\leq 2C\sum_{l=N}^\infty 2^{-l\alpha}
\\&\leq C' 2^{-N\alpha}.
\end{align*}
Since $2^{-N} \approx |x-y|$, this gives the correct H\"older-continuity estimate.
\begin{figure}[htbp]
\centering
\includegraphics[width=0.85\textwidth]
{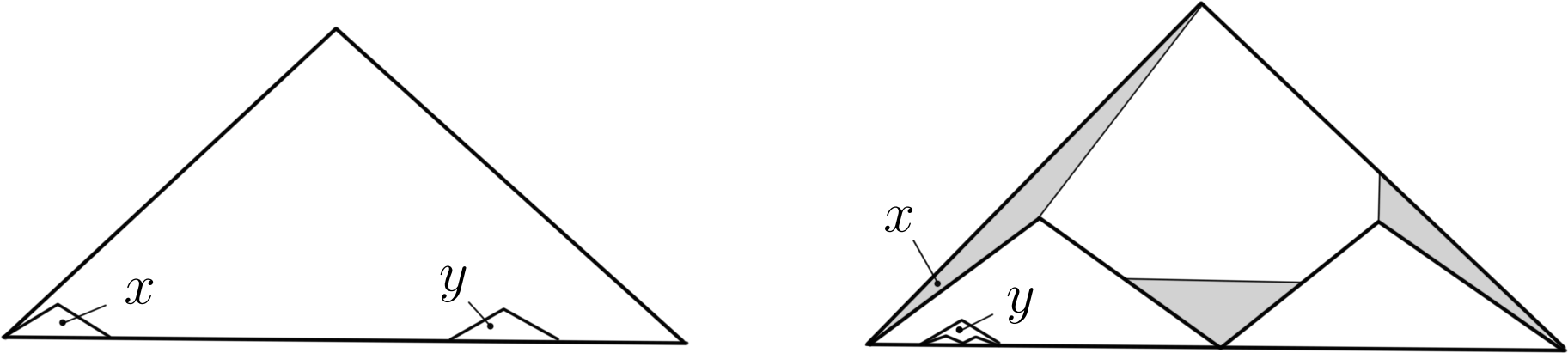}
\caption{The two cases for positioning of $x$ and $y$. On the right, the triangular regions mapped very close to the boundary are highlighted.}
\label{fig:casesxy}
\end{figure}

\emph{Case 2.} $N > n$. We consider a few possibilities. If $x$ and $y$ are in the same set $U_{n,k}$ or $y$ is in one of the two child sets of $U_{n,k}$, then there is nothing to prove as we only need to use the local H\"older estimate for $h$ at most twice to get the required estimate.

Otherwise $x$ must be in one of the triangular regions in $U_{n,k}$ which were mapped to sets of distance at most $d_n$ from the corresponding boundary point under $h$, see the right part of Fig. \ref{fig:casesxy}, as otherwise the distance $|x-y|$ would be too large.

If now $U_{n',k'}$ does not share a boundary point with $U_{n,k}$ (as in the right part of Fig. \ref{fig:casesxy}), then we can argue using the idea from Case 1 to assume that it does. Indeed, Case 1 lets us replace $y$ with any point $y'$ belonging to a set $U_{n',m}$ within the same generation $n'$ as long as $|y-y'| \approx 2^{-N}$. Geometrically it is simple to see that if $U_{n',k'}$ didn't already share a boundary point with $U_{n,k}$ then we may pick the new point $y'$ from such a set instead. Hence we are assumed to be in a configuration similar to Fig. \ref{fig:casesxy2}.

%

\begin{figure}[htbp]
\centering
\includegraphics[width=0.85\textwidth]
{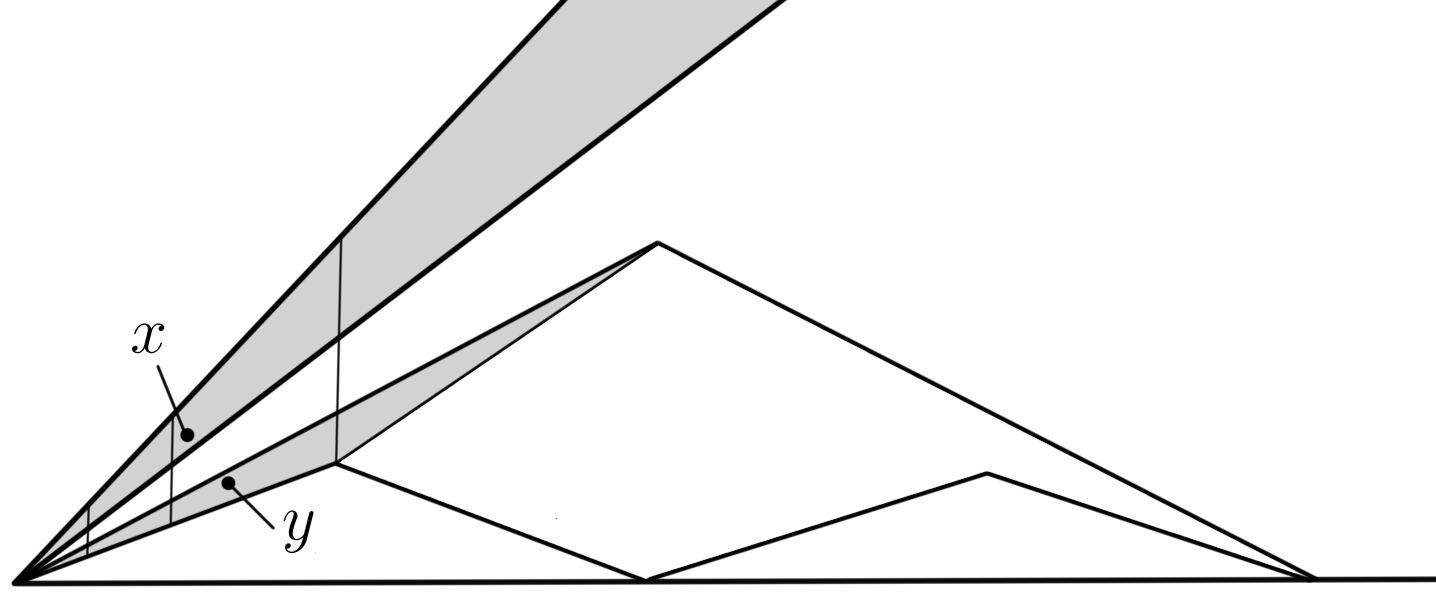}
\caption{Points $x$ and $y$ very close to each other.}
\label{fig:casesxy2}
\end{figure}

In this case, we must recall the construction of the map $h$ inside such triangular regions, where it was defined dyadically by splitting the region into dyadic slices depending on their projection to the $x$-axis as in Fig. \ref{fig:parts} and the discussion at the end of Step 4. Let us suppose that $x$ lies in the $m$:th dyadic slice, mapped to a small set of diameter less than $d_{n+m-1}$. We may also suppose that $y$ belongs to the corresponding dyadic slice within $U_{n',k'}$, as otherwise we may again replace $y$ by such a point while this time using the H\"older-estimates inside $U_{n',k'}$.

Now both $x$ and $y$ are mapped within distance $d_{n+m-1}$ of each other. We may now conclude by assuming that the sequence $d_n$ was picked to be small enough so that we get the desired estimate. To be precise, since we assumed that $x$ and $y$ belonged to corresponding dyadic slices which were not neighbours, the distance $|x-y|$ is bounded from below in terms of some function $n+m-1$ (depending on our initial choice of angles for the curves $\gamma_{n,k}$ on the domain side). Hence the inequality $d_{n+m-1} \leq C|x-y|^\alpha$ may be obtained by small enough choice of sequence $(d_n)$.
\vskip 5pt
\textbf{Step 6. Injectification of the map.} In this part of the proof we explain how the map $h$ may be modified slightly to produce a homeomorphic extension $H : \dd \to \yy$ instead. The map $h$ itself has the potential issue of not being injective on each set $U_{n,k}$. For example, in the case where $\Gamma = \Gamma_{n,k}$ and its child curve $\Gamma_+$ intersect along a curve $\alpha_+$ then the preimage of $\alpha_+$ will be a set where we have loss of injectivity. The curves $\Gamma_{n,k}$ may also touch $\partial \yy$ outside of their endpoints as they were simply defined as shortest curves inside $\overline{\yy}$.

But since the endpoints of each curve $\Gamma_{n,k}$ were 'good points' in $\partial\yy$, it is fairly clear that we may move each curve slightly towards the interior of $\yy$ and even replace each such curve by a piecewise linear approximation instead. Here we may choose to make the piecewise linear approximation such that it produces a finite number of pieces in each compact subset of $\yy$, possibly allowing for fine adjustments as we near the boundary at the endpoints of $\Gamma_{n,k}$ (to account for the dyadic way the map $h$ was defined near the preimages of such endpoints, recall again Fig. \ref{fig:parts}). This may also be done in such a way that none of the curves $\Gamma_{n,k}$ intersect within the interior of $\yy$ and hence produce topologically the same configuration as the curves $\gamma_{n,k}$ on the domain side. It is also clear that such modifications may be done in an arbitrarily small fashion on each dyadic level, if necessary so that the adjustment also gets smaller and smaller as we near an endpoint of $\Gamma_{n,k}$.

The extension map $h$ will be modified a follows. First, we adjust the parametrization $\tau'$ to map instead to the new image curves $\Gamma_{n,k}^*$ in place of $\Gamma_{n,k}$. At this point we may also injectify the parametrization $\tau'$ in parts where it may have not been injective before (certain intervals could be mapped into single points, recall Fig. \ref{define}). This can also be done in an arbitrarily small way, again adjusting for increasingly smaller changes as we near the boundary.
\begin{figure}[htbp]
\centering
\includegraphics[width=0.85\textwidth]
{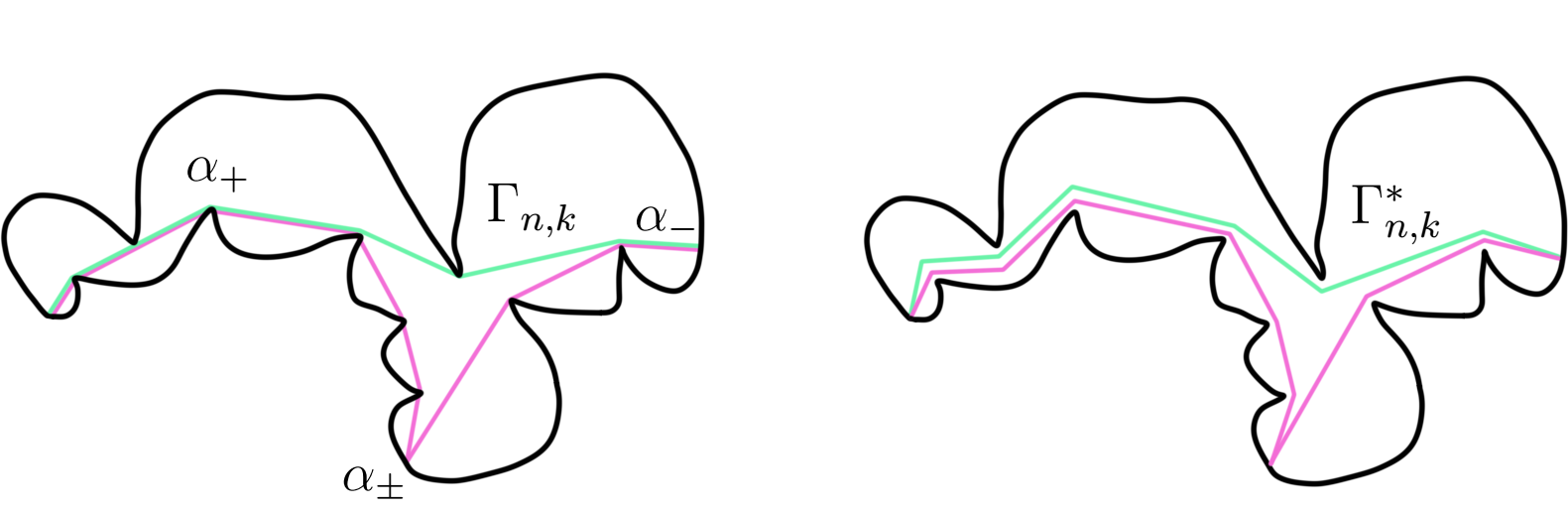}
\caption{Injectification of the map.}
\label{fig:injectify}
\end{figure}
On the sets where $h$ was already injective before, it is fairly clear that the aforementioned changes to the curves $\Gamma_{n,k}$ only produce very small perturbations of the construction of $h$. On the preimages of the possible overlapped curves $\alpha_+, \alpha_-,$ and $\alpha_\pm$ under $h$, we must redefine $h$ as a map $H$ which now sends the preimages to an appropriate region defined by the two replacements of such curves. For example, if originally $\alpha_+$ was the intersection of $\Gamma$ and $\Gamma_+$, then after the fact that we have replaced $\Gamma$ and $\Gamma_+$ by two curves $\Gamma^*$ and $\Gamma_+^*$ that do not intersect each other, there are parts $\alpha^* \subset \Gamma^*$ and $\alpha^*_+ \subset \Gamma^*_+$ which correspond to the part that was $\alpha_+$ before. Since the adjustment may be done in an arbitrarily small way, the other endpoints of the curves $\alpha^*$ and $\alpha^*_+$ (which are now distinct points) may be connected by an arbitrarily small curve $\ell^*$, in fact a line segment if the earlier perturbation is done appropriately. Furthermore within the region defined by $\alpha^*, \alpha^*_+$ and $\ell^*$ it will be possible to define an isotopy $\alpha^t$, $t \in [0,1]$, between $\alpha^*$ and $\alpha^*_+$ with the other endpoint of $\alpha^t$ moving on the segment $\ell^*$.

Recalling that the map $h$ was defined on all preimages of the curve $\alpha_+$ as some form of homotopy between different parametrizations of $\alpha_+$. For example, in the quadrilateral $Q' \subset Q_+$ the map $h$ was simply defined by linear interpolation between the boundary map $\tau'$ on two opposite sides of $Q'$. If we now modify this linear interpolation by mapping to the curves $\alpha^t$, where $t$ runs over the whole interval $[0,1]$, instead then this produces an injective version of the map $h$ in this set. As usual, since the modifications were arbitrarily small, the change to the H\"older-constant may also be assumed to be arbitrarily small here.
\end{proof}


\begin{thebibliography}{00}

\bibitem{Ah}
L. V. Ahlfors, \textit{Extension of quasiconformal mappings from two to three dimensions}, 
Proc. Nat. Acad. Sci. U.S.A. 51 (1964), 768--771. 



\bibitem{Ba}
\by{\name{Ball}{J.}}
\paper{Global invertibility of Sobolev functions and the
interpenetration of matter}
\jour{Proc. Roy. Soc. Edinburgh Sect. A}
\vol{88\nom 3--4}
\pages{315--328}
\yr{1981}
\endpaper

\bibitem{BMC}
\by{\name{Bellido}{J.C.} and \name{Mora-Corral}{C.}}
\paper{Approximation of H\"older continuous homeomorphisms by piecewise affine homeomorphisms}
\jour{Houston J. Math.}
\vol{37 {\rm no.2}}
\pages{449--500}
\yr{2011}
\endpaper

\bibitem{DP} 
\by{\name{Daneri}{S.} and \name{Pratelli}{A.}} 
\paper{Smooth approximation of bi-Lipschitz orientation-preserving homeomorphisms}
\jour{Ann. Inst. H. Poincar\'e Anal. Non Lin\'eaire}
\vol{31}
\pages{567--589}
\yr{2014}
\endpaper

\bibitem{DP2} 
\by{\name{Daneri}{S.} and \name{Pratelli}{A.}} 
\paper{A planar bi-Lipschitz extension theorem}
\jour{Adv. Calc. Var}
\vol{8 {\rm no.3}}
\pages{221--266}
\yr{2015}
\endpaper


\bibitem{HP}
\by{\name{Hencl}{S.} and \name{Pratelli}{A.}}
\paper{Diffeomorphic approximation of $W^{1,1}$ planar Sobolev homeomorphisms}
\jour{J. Eur. Math. Soc.}
\vol{20\rm, no. 3,}
\pages{597--656}
\yr{2018}
\endpaper

\bibitem{HKO}
\by{\name{Hencl}{S.}, \name{Koski}{A.} and \name{Onninen}{J.}}
\paper{Sobolev homeomorphic extensions from two to three dimensions}
\jour{arXiv 2112.14767}
\endprep



\bibitem{IKO}
\by{\name{Iwaniec}{T.}, \name{Kovalev}{L.V.} and \name{Onninen}{J.}}
\paper{Diffeomorphic Approximation of Sobolev Homeomorphisms}
\jour{Arch. Rational Mech. Anal.}
\vol{201}  \pages{1047--1067}
\yr{2011}
\endpaper

\bibitem{KKO}
\by{\name{Koskela}{P.}, \name{Koski}{A.} and \name{Onninen}{J.}}
\paper{Sobolev homeomorphic extensions onto John domains}
\jour{J. Funct. Anal.}
\vol{279}  \pages{17pp}
\yr{2020}
\endpaper

\bibitem{KO}
\by{\name{Koski}{A.} and \name{Onninen}{J.}}
\paper{Sobolev homeomorphic extensions}
\jour{J. Eur. Math. Soc.}
\vol{23\rm, no. 12,}
\pages{4065--4089}
\yr{2021}
\endpaper


\bibitem{K}
\by{\name{Kovalev}{P.}}
\paper{Symmetrization and extension of planar bi-Lipschitz maps}
\jour{Ann. Acad. Sci. Fenn. Math.}
\vol{43 \rm, no. 1,}  
\pages{541--556}
\yr{2018}
\endpaper

\bibitem{K2}
\by{\name{Kovalev}{P.}}
\paper{Optimal extension of Lipschitz embeddings in the plane}
\jour{Bull. Lond. Math. Soc.}
\vol{51 \rm, no. 4,}  
\pages{622--632}
\yr{2019}
\endpaper


\bibitem{Mo}
\by{ \name{Moise}{E.E.}}
\book{Geometric topology in dimensions 2 and 3}
\publ{Graduate Texts in Mathematics, Vol. 47. Springer-Verlag, New York-Heidelberg}
\yr{1977}
\endbook


\bibitem{M}
\by{\name{Mora-Corral}{C.}}
\paper{Approximation by piecewise affine homeomorphisms of Sobolev homeomorphisms that are smooth outside a point}
\jour{Houston J. Math.}
\vol{35}
\pages{515--539}
\yr{2009}
\endpaper


\bibitem{T}
\by{\name{Tukia}{P.}}
\paper{The planar Sch\"onflies theorem for Lipschitz maps}
\jour{Ann. Acad. Sci. Fenn. Ser. A I Math.}
\vol{5}
\pages{49--72}
\yr{1980}
\endpaper


\bibitem{T2}
\by{\name{Tukia}{P.}}
\paper{Extension of quasisymmetric and Lipschitz embeddings of the real line into the plane}
\jour{Ann. Acad. Sci. Fenn. Ser. A I Math.}
\vol{6}
\pages{89--94}
\yr{1981}
\endpaper


\bibitem{TV}
P. Tukia and J. V\"{a}is\"{a}l\"{a}, \textit{Quasiconformal extension from dimension {$n$} to {$n+1$}}, Ann. of Math. (2) 115 (1982), no. 2, 331--348. 

\end{thebibliography}
\end{document}